\theoremstyle{definition}
\newtheorem{theorem}{Theorem}[section]
\newtheorem{remark}[theorem]{Remark}
\newtheorem{lemma}[theorem]{Lemma}
\theoremstyle{remark}
\newtheorem{claim}{Claim}
\def\Tbb{\mathbb{T}}
\def\Zbb{\mathbb{Z}}
\def\Rbb{\mathbb{R}}
\def\Cbb{\mathbb{C}}
\newcommand{\G}{
	\mathrm{SL}_{2}(\mathbb{C})
}
\newcommand{\g}{
	\mathfrak{g}
}
\begin{document}
	
	\title{Adjoint Reidemeister torsions of two-bridge knots}
	
	\author{Seokbeom Yoon}
	\email{sbyoon15@kias.re.kr}
	%
	
	
	
	
	\keywords{Reidemeister torsion, adjoint representation, two-bridge knot, Riley polynomial, character variety}
	
	\begin{abstract} 
		We give an explicit formula for the adjoint Reidemeister torsion of two-bridge knots and prove that the adjoint Reidemeister torsion satisfies a certain type of vanishing identities.
	\end{abstract}
	
	\maketitle
	\tableofcontents

	\section{Introduction}
	Let $K$ be a knot in $S^3$ and $M$ be the knot exterior.
	For an irreducible representation $\rho:\pi_1(M)\rightarrow \G$ 
	the \emph{adjoint Reidemeister torsion} $\Tbb_\gamma(\rho)$ is defined, under some reasonable assumptions, as the sign-refined algebraic torsion of the cochain complex of $M$ with the coefficient $\mathfrak{sl}_{2}(\mathbb{C})
	$ twisted by the adjoint representation of $\rho$.
	Here $\gamma$ is a simple closed curve in $\partial M$ which has a role in specifying a basis of the twisted cohomology group $H^\ast(M;	\mathfrak{sl}_{2}(\mathbb{C})_\rho
	)$, see \cite{porti1997torsion, dubois2003torsion}.
	It is known that $\mathbb{T}_\gamma(\rho)$ is invariant under conjugation, so the notion of adjoint Reidemeister torsion is also well-defined for the character of $\rho$.
	
	The adjoint torsion is quite complicated to compute in general and its explicit formula is known only in a few examples, \cite{dubois2009non, tran2014twisted, tran2018twisted}.
	All successful computations so far first compute the adjoint twisted Alexander polynomial $\Delta_{K}^{\mathrm{Ad}\rho}(t)$ and then obtain the adjoint torsion from the formula of \cite{yamaguchi2008relationship}
	\[ \Tbb_\lambda(\rho) =- \lim_{t \rightarrow 1} \frac{\Delta_K^{\mathrm{Ad} \rho}(t)	}{t-1}, \quad \lambda : \textrm{the canonical longitude of $K$.}\]
	It is computationally advantageous that the adjoint twisted Alexander polynomial is defined from an acyclic chain complex.
	However, the computation itself would be further complicated, as an indeterminant $t$ interferes in the Fox differential calculus.
	
	In this paper, we give an explicit formula for the adjoint Reidemeister torsion of two-bridge knots.
	Our computation uses a well-known observation in \cite{weil1964remarks} that relates the twisted cohomology group and the tangent space of the character variety.
	We stress that it is simple and effective, compared to the method using the adjoint twisted Alexander polynomial, as we do not require the Fox differential calculus and  we can directly obtain a relation of the adjoint torsion and the character variety.
	We here summarize our results briefly. 
	
	Let $K$ be a two-bridge knot given by the Schubert normal form $(p,q)$. Here $p>0$ and $q$ are relatively prime odd integers with $-p<q<p$. 
	The knot group of $K$ has a presentation
	\begin{equation*}
		\pi_1(M)= \langle g_1, g_2 \,|\, w g_1 w^{-1} g_2^{-1}  \rangle
	\end{equation*}
	where $w = g_1^{\epsilon_1}g_2^{\epsilon_2}\cdots g_1^{\epsilon_{p-2}}g_2^{\epsilon_{p-1}}$ and $\epsilon_i = (-1)^{\lfloor \frac{iq}{p} \rfloor}$. 
	Here $\lfloor x \rfloor$ means the
	greatest integer less than or equal to $x \in \Rbb$.
	Up to conjugation, an irreducible representation $\rho : \pi_1(M) \rightarrow \G$ is given by 
	\begin{equation*}
		\rho(g_1) = \begin{pmatrix} m & 1 \\ 0 & m^{-1} \end{pmatrix}, \ \rho(g_2) = \begin{pmatrix} m & 0 \\ -u & m^{-1} \end{pmatrix}
	\end{equation*}
	for some $m \neq 0$ and $u \neq 0$ satisfying the Riley polynomial of $K$, \cite{riley1984nonabelian}. We denote by $\chi_\rho$ the character of $\rho$ and let $\phi_g := g _{11} - (m-m^{-1})g_{12}$ for $g \in \pi_1(M)$ where $g_{ij} \in \Zbb[m^{\pm1},u]$ is the $(i,j)$-entry of $\rho(g)$. 
	Note that the Riley polynomial of $K$  is equal to $\phi_w$.
	\begin{theorem}[Theorem \ref{thm:main}] \label{thm:int1} 
		Suppose that $\chi_\rho$ is $\mu$-regular with $m \neq \pm1$, $\frac{\partial \phi_w}{\partial m} \neq0$, and $\frac{\partial \phi_w}{\partial u} \neq0$. Then we have
		\[ \Tbb_\mu(\chi_\rho)= \pm \, \frac{m^{\epsilon_k + 1}}{2(m^2-1)} \frac{w_{11}}{v_{11}\phi_v} \frac{\partial \phi_w}{\partial u}\]
		where $v=g_1^{\epsilon_1} g_2^{\epsilon_2} \cdots g_1^{\epsilon_{k-2}} g_2^{\epsilon_{k-1}}$ and $0 < k < p$ is a unique (odd) integer satisfying $k q \equiv \pm 1 \ (\mathrm{mod}\ 2p)$. Here the sign $\pm$ does not depend on the choice of $\chi_\rho$.
	\end{theorem}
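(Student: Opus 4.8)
The plan is to compute the torsion directly from the deficiency-one presentation $\pi_1(M)=\langle g_1,g_2\mid r\rangle$ with $r=wg_1w^{-1}g_2^{-1}$, bypassing the adjoint twisted Alexander polynomial. Give $M$ the CW structure with one $0$-cell, the two $1$-cells dual to $g_1,g_2$, and the single $2$-cell glued along $r$; twisting the cellular chain complex of the universal cover by $\Phi:=\mathrm{Ad}\circ\rho$ yields
\[
0\to \mathfrak{sl}_2(\Cbb)\xrightarrow{\ \partial_2\ }\mathfrak{sl}_2(\Cbb)^2\xrightarrow{\ \partial_1\ }\mathfrak{sl}_2(\Cbb)\to 0,
\]
where $\partial_1=\big(\Phi(g_1)-I,\ \Phi(g_2)-I\big)$ and $\partial_2$ is the column of Fox derivatives $\Phi(\partial r/\partial g_1),\,\Phi(\partial r/\partial g_2)$. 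First I would record that irreducibility forces $H_0=H^0=0$, so the complex is concentrated in the top two degrees and $\dim H_1=\dim H_2$; the $\mu$-regularity hypothesis then guarantees both are one-dimensional and fixes the admissible bases needed to define $\Tbb_\mu$.

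The heart of the method is Weil's identification of $H^1(M;\mathfrak{sl}_2(\Cbb)_\rho)$ with the Zariski tangent space of the character variety at $\chi_\rho$. Since that variety is the curve cut out by the Riley polynomial $\phi_w(m,u)=0$, I would produce an explicit generating $1$-cocycle $z$ by differentiating the family $\rho_{(m,u)}$ along the curve: setting $z(g)=\frac{d}{ds}\big|_{0}\rho_s(g)\,\rho(g)^{-1}$ gives a cocycle controlled by $\partial_m\rho$ and $\partial_u\rho$, while the tangency relation $\frac{\partial\phi_w}{\partial m}\,dm+\frac{\partial\phi_w}{\partial u}\,du=0$ selects the one-dimensional direction. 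The hypotheses $\frac{\partial\phi_w}{\partial m}\neq0$ and $\frac{\partial\phi_w}{\partial u}\neq0$ ensure $z$ is nonzero and that either parameter may be used to trivialize it; the factor $\frac{\partial\phi_w}{\partial u}$ in the final formula enters precisely here. The dual class in $H_2$ I would pin down by Poincaré duality against the invariant vector of $\mathrm{Ad}_{\rho(\mu)}$ on the boundary torus, which is where the eigenvalue factor $m^2-1=m(m-m^{-1})$ of $\mathrm{Ad}_{\rho(g_1)}$ (eigenvalues $m^{2},m^{-2},1$) and the normalizing $\tfrac12$ from the trace form originate.

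With these bases in hand I would assemble $\Tbb_\mu(\chi_\rho)$ as the sign-refined alternating product of determinants of $\partial_1,\partial_2$ restricted and corestricted to chosen complements, corrected by the passage from the algebraic basis to the geometric Weil/$\mu$-basis. Because $H_0=0$, the map $\partial_1$ is surjective, so one may choose a subspace of $C_1$ mapping isomorphically onto $C_0$ and express the torsion through $\partial_2$ read in the complementary coordinates. At this stage I would exploit $\rho(wg_1w^{-1})=\rho(g_2)$ to rewrite $\Phi(\partial r/\partial g_i)=(I-\Phi(g_2))\,\Phi(\partial w/\partial g_i)+(\text{explicit terms})$, collapsing the whole computation onto data of the single word $w$ and the entries $w_{11},\phi_w$. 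The element $v$ and the congruence $kq\equiv\pm1\ (\mathrm{mod}\ 2p)$ should appear when one locates, inside the palindromic two-bridge word $w$, the unique syllable at which the partial product realizes the meridian-normalizing direction demanded by $\mu$-regularity: matching the geometric basis to $\mu$ forces exactly the prefix $v=g_1^{\epsilon_1}\cdots g_2^{\epsilon_{k-1}}$, and the factors $v_{11}$, $\phi_v$, and $m^{\epsilon_k}$ record the $\mathrm{Ad}\rho$-transport along this prefix.

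I expect the main obstacle to be two interlocking bookkeeping problems. The first is tracking the Weil isomorphism together with the Poincaré-dual $H_2$-basis precisely enough to obtain the exact scalar $\frac{m^{\epsilon_k+1}}{2(m^2-1)}$, rather than merely the invariant up to a universal constant. The second is the combinatorial identity that reduces the Fox-derivative determinant of the full word $w$ to the compact form $\frac{w_{11}}{v_{11}\phi_v}\frac{\partial\phi_w}{\partial u}$, which should hinge on the internal symmetry of the two-bridge word and on verifying that the prefix $v$ singled out by the congruence is indeed the correct splitting point. Finally, showing that the residual sign is independent of $\chi_\rho$ will require a continuity argument along the connected character curve.
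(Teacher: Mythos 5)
Your overall strategy --- working directly on the presentation $2$-complex, using Weil's correspondence to produce a cocycle tangent to the Riley curve as the generator of $H^1$, and normalizing $H^2$ against the boundary torus --- is exactly the paper's, but the two points you flag as ``obstacles'' are not bookkeeping: they are the substance of the proof, and your plan for each would not go through as written. First, you propose to evaluate $\Phi(\partial r/\partial g_i)$ by Fox calculus and then find a ``combinatorial identity'' collapsing the resulting determinant built from the full word $w$. The paper never computes a Fox derivative; instead it proves the matrix identity $\rho(wg_1w^{-1}g_2^{-1})=I+\phi_w M$ for an explicit matrix $M$ (using Riley's relation $w_{12}u+w_{21}=0$), so that by the Weil formula $(\delta^1 A_{\rho_t})(\widetilde r)=\frac{d}{dt}\rho_t(r)$ the coboundaries of the deformation cochains are literally $\frac{\partial\phi_w}{\partial m}M$ and $\frac{\partial\phi_w}{\partial u}M$ plus explicit corrections. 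This identity is also what produces a sequence $b^1$ with $\delta^1(b^1)$ spanning $\mathrm{Im}\,\delta^1$: the paper introduces a three-parameter family $\overline{\rho}$ with independent eigenvalue parameters $t_1,t_2$ and a crossing parameter $t_3$, yielding three cochains $A_{\rho^1_t},A_{\rho^2_t},A_{\rho^3_t}$, whereas your single tangent cocycle $z$ gives the $H^1$ generator but no complement to $\mathrm{Ker}\,\delta^1$.

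Second, and more seriously, your account of where $v$ and the congruence $kq\equiv\pm1\ (\mathrm{mod}\ 2p)$ come from (``the unique syllable at which the partial product realizes the meridian-normalizing direction demanded by $\mu$-regularity'') does not correspond to any step that can be carried out; the meridian pairing only contributes the scalar $4(1-m^{-2})=\langle P,B(\widetilde g_1)\rangle_\g$ normalizing $\mathbf{h}^1$. In the paper, $v$ enters exclusively through the normalization of $\mathbf{h}^2$: one must represent the fundamental class of $\partial M$ as a $2$-chain in the presentation complex, and reading the relator loop in the Schubert normal form from two base points gives $\widetilde{\partial M}=-v'^{-1}\cdot\widetilde r+w^\dagger\cdot\widetilde r$, where $v'$ is the base-point-change word (a prefix of $w$ of length $k-1$ or $k$ with $kq\equiv\pm1\ (\mathrm{mod}\ 2p)$) and $w^\dagger w$ is the longitude. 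Pairing with $P$ then forces the coefficient $-1/(4v'_{11}\phi_{v'})$, which is the sole source of the factors $v_{11}\phi_v$ and $m^{\epsilon_k+1}$. Without this explicit peripheral $2$-chain your plan cannot produce the congruence or the prefix $v$. (Your last point is fine: sign independence is indeed just the statement that the combinatorial data of the computation do not vary with $(m,u)$.)
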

	
	The adjoint torsion has intriguing and fruitful interactions with quantum field theory, see e.g. \cite{witten1989quantum, gukov2005three}.
	Recently, it is showed in \cite{benini2019rotating, gang2019precision}  that a certain sum of adjoint torsions realizes the so-called Witten index. 
	As a mathematical byproduct, it is conjectured that if a hyperbolic knot $K$ has the character variety $X_K$ of irreducible $\G$-representations consisting of 1-dimensional components (which is the case of hyperbolic two-bridge knots), then we have
	\begin{equation} \label{eqn:conj}
	\sum_{\chi_\rho \in \mathrm{tr}_\mu^{-1}(c)} \frac{1}{\mathbb{T}_\mu(\chi_\rho)} = 0 \quad \textrm{for generic } c \in \Cbb
	\end{equation}
	where $\mathrm{tr}_\mu : X_K \rightarrow \Cbb$ is the trace function of a meridian $\mu$. See \cite{gang2019adjoint} for details.
	\begin{theorem}[Theorem \ref{thm:main2}] \label{thm:int2} The equation \eqref{eqn:conj} holds for all hyperbolic two-bridge knots.
	\end{theorem}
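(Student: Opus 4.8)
The plan is to feed the explicit formula of Theorem~\ref{thm:int1} into the residue theorem on $\mathbb{P}^1$. Fix a generic $c\in\Cbb$ and a value $m$ with $m+m^{-1}=c$; then the portion of the fiber $\mathrm{tr}_\mu^{-1}(c)$ lying over this $m$ is in bijection with the set of roots $u$ of the Riley polynomial $\phi_w(m,\cdot)$, regarded as a one–variable polynomial in $u$ with $m$ held fixed. For generic $c$ these roots are simple, nonzero, disjoint from the zeros of $w_{11}$, and satisfy $m\neq\pm1$, $\tfrac{\partial\phi_w}{\partial m}\neq0$, $\tfrac{\partial\phi_w}{\partial u}\neq0$, so Theorem~\ref{thm:int1} applies at each of them. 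Since $\mathrm{tr}_\mu^{-1}(c)$ is the union of the two branches over $m$ and over $m^{-1}$, and the prefactor $\frac{2(m^2-1)}{m^{\epsilon_k+1}}$ of $1/\Tbb_\mu$ is a function of $m$ alone, it suffices to prove, for each fixed $m$,
\[ \sum_{u\,:\,\phi_w(m,u)=0}\frac{v_{11}\,\phi_v}{w_{11}}\left(\frac{\partial\phi_w}{\partial u}\right)^{-1}=0; \]
summing the contributions of the two branches then yields \eqref{eqn:conj}.

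The next step is to read the summand as a residue. At a simple zero $u$ of $\phi_w$ the quantity $\frac{v_{11}\phi_v}{w_{11}}\big(\frac{\partial\phi_w}{\partial u}\big)^{-1}$ is exactly the residue at $u$ of the meromorphic $1$-form $\omega=\frac{v_{11}\phi_v}{w_{11}\,\phi_w}\,du$ on $\mathbb{P}^1$. Hence the left-hand side above is $\sum_{\phi_w=0}\mathrm{Res}\,\omega$, and by the residue theorem this equals the negative of the sum of the residues of $\omega$ at its remaining poles, which lie at the zeros of $w_{11}$ and at $u=\infty$. I would therefore prove that both of these remaining contributions vanish.

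For the point at infinity I would use a degree count in $u$. As $u$ enters $\rho$ only through the $(2,1)$-entries of $\rho(g_2^{\pm1})$, the word $v$ of length $k-1$ gives $\deg_u(v_{11}\phi_v)\le k-1$, while $\deg_u\phi_w=(p-1)/2$ and one checks $\deg_u w_{11}=(p-1)/2$, so the denominator of $\omega$ has degree $p-1$. Because $k$ is odd with $0<k<p$ and $p$ is odd we have $k\le p-2$, so the numerator degree is at most $(p-1)-2$; thus $\omega$ vanishes to order at least $2$ at infinity and $\mathrm{Res}_\infty\omega=0$.

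The main obstacle is the sum of residues over the zeros of $w_{11}$. Here I would establish the key algebraic identity that, modulo $w_{11}$, the polynomials $v_{11}\phi_v$ and $\phi_w$ are proportional with ratio a function of $m$ alone; equivalently that $\frac{v_{11}\phi_v}{\phi_w}$ is constant on $\{w_{11}=0\}$. Granting this and writing $\lambda(m)$ for the common ratio, the residues over the zeros of $w_{11}$ sum to $\lambda(m)\sum_{w_{11}(c)=0}\big(\frac{\partial w_{11}}{\partial u}(c)\big)^{-1}$, and the last sum is the sum of all residues of $1/w_{11}$, which vanishes since $\deg_u w_{11}=(p-1)/2\ge2$ for every hyperbolic two-bridge knot (so $p\ge5$). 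Proving the proportionality identity is the crux of the argument: I expect it to follow from the palindromic symmetry $\epsilon_i=\epsilon_{p-i}$ of the two-bridge relator together with the congruence $kq\equiv\pm1\ (\mathrm{mod}\ 2p)$ defining $k$, which relates the initial segment $v$ to a terminal segment of $w$ and so forces a linear relation among $w_{11}$, $v_{11}$ and the functions $\phi_v,\phi_w$. The whole scheme can be verified directly for the figure-eight knot $(5,3)$: there $w_{11}=u^2+(2-m^2)u+1$, and on $\{w_{11}=0\}$ both $v_{11}\phi_v$ and $\phi_w$ reduce to constant multiples of $u-m^2+1$, so their ratio is constant and the residues at $w_{11}=0$ indeed cancel.
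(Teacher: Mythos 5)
Your reduction to the fixed-$m$ sum, the residue-theorem setup, and the vanishing of the residue at $u=\infty$ are all correct, and the framework is a legitimate variant of what the paper does (the paper instead writes $w=vy$, uses the exact identity $v_{11}=w_{11}y_{22}-w_{12}y_{21}$ to cancel $w_{11}$ from the denominator on $\{\phi_w=0\}$, and then applies the Euler--Jacobi vanishing theorem directly to $\phi_w$). But the crux of your argument --- the claim that $v_{11}\phi_v$ and $\phi_w$ are proportional modulo $w_{11}$ with ratio a function of $m$ alone --- is false in general. Reducing modulo $w_{11}$ one has $v_{11}\equiv -w_{12}y_{21}$ and $\phi_w\equiv -(m-m^{-1})w_{12}$, and since $w_{11}$ and $w_{12}$ are coprime in $u$ (because $\det\rho(w)=1$), your claim is equivalent to $y_{21}\phi_v$ being \emph{constant} in $u$ modulo $w_{11}$. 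For $(p,q)=(7,3)$ (the knot $5_2$) one has $k=5$, $v=g_1g_2g_1^{-1}g_2^{-1}$, $y=g_1g_2$, $w_{11}=-u^3+(2m^2+m^{-2}-2)u^2-(m^4-2m^2+3)u+m^2$, and
\[ y_{21}\phi_v=-um^{-1}\bigl(u^2-(2m^2+m^{-2}-2)u+m^4-2m^2+2\bigr)=m^{-1}w_{11}+m^{-1}u-m, \]
so $y_{21}\phi_v\equiv m^{-1}u-m \ (\mathrm{mod}\ w_{11})$ is not constant, and $v_{11}\phi_v/\phi_w$ genuinely varies over the three roots of $w_{11}$. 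Your figure-eight check is misleading precisely because there $\deg_u w_{11}=2$, so only a single coefficient has to match and it happens to do so.

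What is true --- and what your route would actually need --- is the weaker statement that $y_{21}\phi_v=\alpha\,w_{11}+h$ with $\alpha\in\Cbb$ and $\deg_u h\le\deg_u w_{11}-2$ (visible in the $5_2$ computation above), after which Euler--Jacobi applied to $w_{11}$ kills the remaining residues. But proving that in general requires matching the top two coefficients of $y_{21}\phi_v$ against those of $w_{11}$, which is exactly the second-leading-term bookkeeping that the paper carries out in Lemma \ref{lem:key} and Claim \ref{claim5} (there for $\phi_w$ rather than $w_{11}$, using the constraint $kq\equiv\pm1\ (\mathrm{mod}\ 2p)$ on the sign pattern $(\epsilon_{k-1},\epsilon_k,\epsilon_{k+1})$). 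So the residue-theorem detour does not let you bypass the hard combinatorial identity; as written, the proposal has a genuine gap at its central step.
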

	
	The paper is organized as follows. We review some backgrounds on the adjoint Reidemeister torsion in Section \ref{sec:basic}. We state our main results, Theorems \ref{thm:main} and \ref{thm:main2}, in  Section \ref{sec:twobridge} and give proofs of Theorems \ref{thm:main} and \ref{thm:main2} in Sections \ref{sec:proof1} and \ref{sec:proof2}, respectively.

	\subsection*{Acknowledgment}
	The author is supported by a KIAS Individual Grant (MG073801) at Korea Institute for Advanced Study.
	
	
	\section{Preliminaries: the adjoint Reidemeister torsion} \label{sec:basic}
	\subsection{Basic definitions} We briefly recall some basic definitions and known results that we need in the following sections. We mainly follow \cite{dubois2006non} and refer to \cite{porti1997torsion, turaev2002torsions, dubois2003torsion} for details.
	In what follows, we denote by $\g$ the Lie algebra of $\G$ with a standard basis
	\begin{equation} \label{eqn:basis}
	e_1= \begin{pmatrix}
	0 & 1 \\ 0	 & 0
	\end{pmatrix}
	,\
	e_2= \begin{pmatrix}
	1 & 0 \\ 0 & -1
	\end{pmatrix}
	,\
	e_3= \begin{pmatrix}
	0 & 0 \\ 1 & 0
	\end{pmatrix}
	\end{equation} and $\langle \cdot, \cdot\rangle_\g$ the Killing form of $\g$
	\begin{equation*}
		\left\langle \begin{pmatrix} b & a \\ c & -b \end{pmatrix}, \begin{pmatrix} b' & a' \\ c' & -b' \end{pmatrix}  \right\rangle_\g = 8  b b' +4 (a c'+ca').
	\end{equation*}

	Let $C_\ast=(0 \rightarrow C_n \rightarrow \cdots  \rightarrow C_0\rightarrow 0)$ be a chain complex of $\Cbb$-vector spaces with boundary maps $\partial_i : C_i \rightarrow C_{i-1}$. 
	For a given basis $c_\ast$ of $C_\ast$ and a given basis $h_\ast$ of the homology group $H_\ast(C_\ast)$, the \emph{algebraic torsion} $\mathrm{tor}(C_\ast,c_\ast, h_\ast)$ is defined 	as follows.
	For each $0\leq i \leq n$ let $b_i$ be any sequence of vectors in $C_i$ such that $\partial_i(b_i)$ is a basis of $\mathrm{Im}\,\partial_i$ and let $\widetilde{h}_i$ be any representative of $h_i$ in $C_i$. We then obtain a new basis  of $C_i$ by combining $\partial_{i+1}(b_{i+1})$, $\widetilde{h}_i$, and $b_i$ in order and let 
	\begin{equation}\label{eqn:torsion}
	\mathrm{tor}(C_\ast,c_\ast, h_\ast) :=\prod_{i=0}^n  \left[(\partial_{i+1}(b_{i+1}),\widetilde{h}_i, b_i)/c_i\right]^{(-1)^{i+1}} \in \Cbb^\ast.
	\end{equation}
	Here $[x/y]$ means the determinant of the transition matrix sending the basis $y$ to the other basis $x$.

	Let $K$ be a knot in $S^3$ and $M$ be the knot exterior with a fixed triangulation. For an irreducible representation $\rho : \pi_1(M)\rightarrow \G$ we consider the twisted cochain complex
	\[C^\ast(M;\g_\rho):=\mathrm{Hom}_{\Zbb[\pi_1 M]} \left( C_\ast(\widetilde{M};\Zbb),\g\right)\]
	where $\widetilde{M}$ is the universal cover of $M$ with the induced triangulation and $\g$ is endowed with a $\Zbb[\pi_1 (M)]$-module structure via the adjoint representation of $\rho$.
	We fix an orientation of each cell in $M$ and denote the cells of $M$ by $c_1,\cdots,c_m$.
	Once we choose  a lift $\widetilde{c}_j$ of each $c_j$ to $\widetilde{M}$ $(1 \leq j\leq m)$, we denote by  $\mathbf{c}^\ast$ a basis of $C^\ast(M;\g_\rho)$ given as
	\begin{equation} \label{eqn:cbasis}
	\mathbf{c}^\ast = \left(c_1^1,c_1^2,c_1^3,\cdots,c_{j}^{1},c_{j}^{2},c_{j}^{3},\cdots,c_{m}^1,c_{m}^2,c_{m}^3\right)	
	\end{equation}
	where $c_j^i$ sends the cell $\widetilde{c}_j$ to $e_i$ and the other cells $\widetilde{c}_k$ $(1 \leq k \neq j \leq m)$ to 0.

	We assume that $\rho$ is \emph{$\mu$-regular} for a meridian $\mu$ of $K$, see \cite{porti1997torsion} for the definition of $\mu$-regularity. It follows that the twisted cohomology group $H^i(M;\g_\rho)$ has dimension 1 for $i=1,2$ and is trivial, otherwise.
	Furthermore, choosing a non-trivial element $P \in \g$ invariant under the adjoint action of $\rho(g)$ for all $g\in \pi_1(\partial M)$, we have isomorphisms $F_i : H^i(M;\g_\rho)\rightarrow \Cbb$ for $i=1,2$ (see  \cite{porti1997torsion, dubois2006non} for details) defined by
	\[F_1(v) =  \langle P, v(\widetilde{\mu}) \rangle_\g,\ F_2(v) =  \langle P, v(\widetilde{\partial M}) \rangle_\g\]
	where $\widetilde{\mu} \in C_1(\widetilde{M};\Zbb)$ and $\widetilde{\partial M}\in C_2(\widetilde{M};\Zbb)$ represent lifts of $\mu$ and $\partial M$ to $\widetilde{M}$, respectively, having the same base point.
	Here $\mu$ and $\partial M$ are regarded as to be coherently oriented, see \cite{dubois2006non} for details.
	Letting  $\mathbf{h}^i$ be a basis of $H^i(M;\g_\rho)$ satisfying $F_i(\mathbf{h}^i)=1$ for $i=1,2$, the \emph{adjoint Reidemeister torsion} $\Tbb_\mu(\rho)$ is defined as
	\begin{equation} \label{eqn:def}
	\Tbb_\mu(\rho)= \epsilon  \cdot \mathrm{tor} \left( C^\ast(M;\g_\rho),\mathbf{c}^\ast,\mathbf{h}^\ast \right) \in \Cbb^\ast.
	\end{equation}
	Here $\epsilon \in \{ \pm1\}$ is the sign determined by so-called Turaev's sign trick and the choice of homology orientation, see \cite{turaev2001introduction} for details.
	\begin{remark}
		It is known that the notion of $\mu$-regularity and the adjoint torsion are invariant under conjugation, so the adjoint Reidemeister torsion  is also well-defined for $\mu$-regular characters.
		Recall that the character of an irreducible representation $\rho$ determines 
		and is  determined by the conjugacy class of $\rho$, see \cite[Proposition 1.5.2]{culler1983varieties}.
	\end{remark}	
	

	\subsection{Infinitesimal deformation}
	We now fix a finite presentation of the knot group $\pi_1(M)$ of deficiency $1$
	\[\pi_1(M) = \langle g_1, \cdots, g_n \, | \, r_1,\cdots,r_{n-1} \rangle \]
	and let $Y$ be the corresponding 2-dimensional cell complex.
	Recall that $Y$ has one 0-cell $p$, $n$ 1-cells $g_1,\cdots,g_n$, and $n-1$ 2-cells $r_1,\cdots,r_{n-1}$.
	Since we may use $Y$ instead of the knot exterior $M$ to compute the adjoint torsion, we consider the twisted cochain complex of $Y$
	\[ 0 \rightarrow C^0(Y;\g_\rho) \overset{\delta^0}{\longrightarrow} C^1(Y;\g_\rho) \overset{\delta^1}{\longrightarrow} C^2(Y;\g_\rho) \rightarrow 0.\]
	Once we fix a lift  of the base point $p$ to the universal cover of $Y$, each cell of $Y$ admits a unique lift correspondingly. Denoting these lifts by using the usual symbol tilde $\widetilde{p}$, $\widetilde{g}_i$, and $\widetilde{r}_j$, it specifies  the basis $\mathbf{c}^\ast$ of $C^\ast(Y;\g_\rho)$ given as in the equation \eqref{eqn:cbasis}.
	With respect to the basis $\mathbf{c}^\ast$, it is known that
	\[\delta^0 = \begin{pmatrix}
	\Phi(g_1-1) \\
	\vdots\\
	\Phi(g_n-1)
	\end{pmatrix}, \quad \delta^1 = \begin{pmatrix}
	\Phi( \frac{\partial r_1}{\partial g_1}) & \cdots & \Phi( \frac{\partial r_1}{\partial g_n}) \\
	\vdots & \ddots & \vdots	\\
	\Phi(\frac{\partial r_{n-1}}{\partial g_1}) & \cdots & \Phi(\frac{\partial r_{n-1}}{\partial g_n})
	\end{pmatrix}
	\] where $\Phi : \Zbb[\pi_1(M)] \rightarrow M_{3,3}(\Cbb)$ is the $\Zbb$-linear extension of the adjoint representation of $\rho$ and $\partial r_j/\partial g_i$ is the Fox free differential.  Here $M_{3,3}(\Cbb)$ is the set of all 3-by-3 matrices. We refer to  \cite{kitano1996twisted, dubois2009non} for details.

	Let $F_n$ be the free group generated by $g_1,\cdots,g_n$ and suppose that we have a one-parameter family of representations $\rho_t : F_n \rightarrow \G$ (parameterized by $t\in\Rbb$) such that $\rho_{t_0}(g_1)=\rho(g_1), \cdots, \rho_{t_0}(g_n)=\rho(g_n)$ for some $t_0 \in \Rbb$.
	Assuming that the entries of $\rho_t(g_1),\cdots,\rho_t(g_n)$ are differentiable at $t=t_0$, we define $A_{\rho_t} \in C^1(Y;\g_\rho)$ by 
	\begin{equation} \label{eqn:A}
	A_{\rho_t} (\widetilde{g}_i) = \left. \frac{d}{dt}\right |_{t=t_0} \rho_t(g_i) \rho_{t_0}(g_i)^{-1} \quad \textrm{for } 1 \leq i \leq n.
	\end{equation}
	It follows from \cite{weil1964remarks} (see also \cite{sikora2012character}) that $\delta^1 (A_{\rho_t}) \in C^2(Y;\g_\rho)$ satisfies (and hence is determined by)
	\begin{equation}\label{eqn:r} (\delta^1 A_{\rho_t})(\widetilde{r}_j) = \left. \frac{d}{dt} \right |_{t=t_0} \rho_t(r_j) \quad \textrm{for } 1 \leq j \leq n-1.
	\end{equation}
	Note that we have $\rho_{t_0}(r_1)=\cdots=\rho_{t_0}(r_{n-1})=I$.

	\section{Adjoint Reidemeister torsion of two-bridge knots} \label{sec:twobridge}
	Let $K$ be a two-bridge knot given by the Schubert normal form $(p,q)$. Here $p>0$ and $q$ are relatively prime odd integers with $-p<q<p$. We refer to  \cite[Chapter 12]{burde2013knots} for details on two-bridge knot. 
	Let $M$ be the knot exterior of $K$.
	The knot group $\pi_1(M)$ of $K$ has a presentation
	\begin{equation} \label{eqn:prsn}
	\pi_1(M) = \langle g_1,g_2 \,|\, w g_1 w^{-1} g_2^{-1}  \rangle
	\end{equation}
	where $w = g_1^{\epsilon_1}g_2^{\epsilon_2}\cdots g_1^{\epsilon_{p-2}}g_2^{\epsilon_{p-1}}$ and $\epsilon_i = (-1)^{\lfloor \frac{iq}{p} \rfloor}$. 
	Here $\lfloor x \rfloor$ means the
	greatest integer less than or equal to $x \in \Rbb$.
	Up to conjugation, an irreducible representation $\rho : \pi_1(M) \rightarrow \G$ is given by 
	\begin{equation} \label{eqn:rep}
	\rho(g_1) = \begin{pmatrix} m & 1 \\ 0 & m^{-1} \end{pmatrix}, \ \rho(g_2) = \begin{pmatrix} m & 0 \\ -u & m^{-1} \end{pmatrix}
	\end{equation}
	for some $m \neq 0$ and $u \neq 0$ satisfying the \emph{Riley polynomial} of $K$, see the equation \eqref{eqn:riley} below.
	We denote by $\chi_\rho$ the character of $\rho$ and let
	$\phi_g := g_{11} - (m-m^{-1})g_{12}$ for $g \in \pi_1(M)$ where  $g_{ij} \in \Zbb[m^{\pm1},u]$ is the $(i,j)$-entry of $\rho(g)$. It is known that the Riley polynomial of $K$  is equal to
	\begin{equation} \label{eqn:riley}
	\phi_w = w_{11}-(m-m^{-1}) w_{12} \in \Zbb [m^{\pm1},u]
	\end{equation}	
	and the character variety $X_K$ of irreducible $\G$-representations is given by
	\[X_K = \left\{ (m,u) \in \Cbb^2 : \phi_w(m,u)=0,\ m\neq0,\ u \neq 0\right\}/_{\sim}\]
	where the quotient $\sim$ means that we identify $(m,u)$ and $(m^{-1},u)$. We refer to \cite{riley1984nonabelian} for details.
	
	\begin{theorem} \label{thm:main} Suppose that $\chi_\rho$ is $\mu$-regular with $m \neq \pm1$, $\frac{\partial \phi_w}{\partial m} \neq0$, and $\frac{\partial \phi_w}{\partial u} \neq0$. Then we have
		\[ \Tbb_\mu(\chi_\rho)=\pm \, \frac{m^{\epsilon_k+1}}{2(m^2-1)} \frac{w_{11}}{v_{11}\phi_v} \frac{\partial \phi_w}{\partial u}\]		
		where $v=g_1^{\epsilon_1}g_2^{\epsilon_2} \cdots g_1^{\epsilon_{k-2}} g_2^{\epsilon_{k-1}}$ and $0 < k < p$ is a unique (odd) integer satisfying $k q \equiv \pm 1 \ (\mathrm{mod}\ 2p)$. 
		Here the sign $\pm$ does not depend on the choice of $\chi_\rho$.
	\end{theorem}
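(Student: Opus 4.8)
The plan is to compute the algebraic torsion directly from the cochain complex of the two-complex $Y$ associated to the presentation \eqref{eqn:prsn}, using the infinitesimal deformation technique of Weil recalled above rather than the adjoint twisted Alexander polynomial. Since the presentation has two generators $g_1,g_2$ and a single relator $r=wg_1w^{-1}g_2^{-1}$, the complex reads $0\to C^0\overset{\delta^0}{\to}C^1\overset{\delta^1}{\to}C^2\to 0$ with $\dim_{\Cbb}C^0=3$, $\dim_{\Cbb}C^1=6$, and $\dim_{\Cbb}C^2=3$. I would first pin down the cohomology: $\mu$-regularity forces $H^0=0$, $\dim H^1=\dim H^2=1$, so $\delta^0$ is injective and $\delta^1$ has rank $2$. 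The torsion \eqref{eqn:torsion} then reduces to a product of three transition determinants, one for each of $i=0,1,2$, and the task is to assemble explicit bases adapted to the images and to the prescribed homology classes $\mathbf{h}^1,\mathbf{h}^2$ normalized by $F_1(\mathbf{h}^1)=F_2(\mathbf{h}^2)=1$.

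The key computational steps are as follows. First I would produce an explicit tangent vector to $X_K$ at $\chi_\rho$: differentiating the constraint $\phi_w(m,u)=0$ gives the direction $\frac{\partial\phi_w}{\partial m}\,dm+\frac{\partial\phi_w}{\partial u}\,du=0$, and under the hypotheses $\frac{\partial\phi_w}{\partial m},\frac{\partial\phi_w}{\partial u}\neq 0$ this is a genuine one-parameter family $\rho_t$ whose associated cocycle $A_{\rho_t}\in C^1(Y;\g_\rho)$ via \eqref{eqn:A} represents a generator of $H^1$. Second, I would compute $\delta^0$ explicitly on the standard basis of $C^0$ (a single conjugation-type vector, since $H^0=0$ means $\delta^0$ is injective of rank $3$) and read off the $3\times 3$ block of $\delta^0$ as a transition factor. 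Third, the heart of the argument is evaluating the two normalizing functionals. For $F_1$ I need to compute $\langle P, A_{\rho_t}(\widetilde\mu)\rangle_\g$ where $\mu=g_1$ is a meridian and $P$ is the common invariant vector of the peripheral subgroup; for $F_2$ I need $\langle P,(\delta^1 A_{\rho_t})(\widetilde{\partial M})\rangle_\g$, which by \eqref{eqn:r} is governed by $\frac{d}{dt}\rho_t(r)$. The appearance of $v$ and the index $k$ with $kq\equiv\pm1\pmod{2p}$ should enter precisely here: the longitude (and hence the fundamental class $\partial M$) is expressed through the word $w$, and the half-word $v$ arises as the natural "square root" of the monodromy, which is why the congruence $kq\equiv\pm1$ selects the break point in the word $w$ where the relevant partial product $v$ sits.

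From there the computation is a matter of carefully tracking how the Fox derivatives $\Phi(\partial r/\partial g_1)$ and $\Phi(\partial r/\partial g_2)$ act, and of recognizing that the quantities $w_{11}$, $v_{11}$, $\phi_v$, and $\frac{\partial\phi_w}{\partial u}$ organize the final ratio. The identity $\phi_g=g_{11}-(m-m^{-1})g_{12}$ should repeatedly collapse matrix entries against the invariant vector $P$, since $P$ lies in the weight-space structure dictated by the upper-triangular form of $\rho(g_1)$; this is what converts abstract cocycle evaluations into the polynomial expressions $\phi_v,\phi_w$. The overall sign $\epsilon$ in \eqref{eqn:def} is carried along but left as $\pm$, and one checks at the end that it is independent of the chosen component of $X_K$ because it depends only on the fixed combinatorics of the triangulation/presentation and the homology orientation, not on $(m,u)$.

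I expect the main obstacle to be the bookkeeping for $F_2$: extracting $\frac{d}{dt}\rho_t(r)$ and matching it against the invariant vector $P$ requires expressing $\rho(w)$ and its derivative in terms of the partial words, and it is precisely in factoring $w$ through $v$ that the arithmetic condition $kq\equiv\pm1\pmod{2p}$ must be justified as giving the correct splitting. Establishing that the factor $\frac{w_{11}}{v_{11}\phi_v}$ emerges cleanly—rather than as an unwieldy sum over all syllables of $w$—will be the crux, and I anticipate it relies on a telescoping or symmetry property of the two-bridge word $w$ (its palindromic structure relative to the midpoint indexed by $k$).
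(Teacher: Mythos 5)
Your overall architecture matches the paper's: work on the $2$-complex $Y$, use the Weil correspondence \eqref{eqn:r} to turn the tangent direction of $\{\phi_w=0\}$ into a generator of $H^1(Y;\g_\rho)$, normalize by $F_1,F_2$, and assemble three transition determinants. But two essential steps are missing or wrong. The first concerns $F_2$: as stated, your plan evaluates $\langle P,(\delta^1 A_{\rho_t})(\widetilde{\partial M})\rangle_\g$, yet $\delta^1 A_{\rho_t}$ is a coboundary and represents the zero class in $H^2(Y;\g_\rho)$, so it cannot normalize $\mathbf{h}^2$. What is actually required is a cochain representative of a generator of $H^2$ evaluated on a $2$-chain representing the fundamental class of $\partial M$. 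This is precisely where $v$ and the congruence $kq\equiv\pm1\ (\mathrm{mod}\ 2p)$ enter, and not through any palindromic or telescoping property of $w$: following the relator loop in the Schubert normal form from two different base points gives $v'^{-1}(wg_1w^{-1}g_2^{-1})v' = g_1 w^\dagger g_2^{-1}(w^\dagger)^{-1}$ for the explicit subword $v'$ of \eqref{eqn:v}, whence $\partial M$ is represented by $-v'^{-1}\cdot\widetilde{r}+w^\dagger\cdot\widetilde{r}$ in $C_2(\widetilde{Y};\Zbb)$; evaluating the representative of $\mathbf{h}^2$ on this chain is the sole source of the denominator $v'_{11}\phi_{v'}$ (equal to $m^{-\epsilon_k-1}\cdot m^2 v_{11}\phi_v$ up to the stated power of $m$). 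Without this geometric input your plan has no mechanism for $v_{11}\phi_v$ to appear at all.

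The second gap is that you give no way to produce a pair $b^1\subset C^1(Y;\g_\rho)$ with $\delta^1(b^1)$ a basis of the $2$-dimensional $\mathrm{Im}\,\delta^1$. Your single family $\rho_t$, being tangent to the representation variety, yields a vector in $\ker\delta^1$ and is useless for this purpose, while falling back on the Fox derivatives $\Phi(\partial r/\partial g_i)$ reintroduces exactly the bookkeeping the method is designed to avoid and gives no visible route to the closed form. The paper's device is to introduce auxiliary one-parameter families of representations of the \emph{free} group that deform $\rho$ off the representation variety --- varying the eigenvalue of $g_1$ alone, of $g_2$ alone, and the parameter $u$ alone --- so that \eqref{eqn:r} still computes $\delta^1$ of the corresponding vectors as derivatives of $\rho_t(r)$, yielding identities such as $(\delta^1 A_{\rho^3_t})(\widetilde{r})=\frac{\partial\phi_w}{\partial u}M$ with $M$ explicit in the entries of $\rho(w)$. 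It is this that makes the factor $w_{11}\frac{\partial\phi_w}{\partial u}$ drop out of the final $3\times3$ determinant. You correctly identify this emergence as the crux, but leave it unresolved, so the proposal does not yet constitute a proof.
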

	\begin{remark} The Riley polynomial $\phi_w$ has no repeated factor, see \cite[Lemma 3]{riley1984nonabelian}. It follows that the number of $\mu$-regular characters excluded in Theorem \ref{thm:main} is finite. In addition, the author does not know whether there exists an irreducible  $\mu$-regular character with either $\frac{\partial \phi_w}{\partial m} =0$ or $\frac{\partial \phi_w}{\partial u} =0$.
	\end{remark}	
	\begin{remark}  
		Othsuki and Takata \cite{ohtsuki2015kashaev} gave a diagrammatic formula for computing the adjoint torsion for a geometric representation of a hyperbolic two-bridge knot. Their formula can be directly applied to any irreducible $\mu$-regular character of a two-bridge knot with $m =\pm 1$.
	\end{remark}
	
	Let  $\mathrm{tr}_\mu : X_K \rightarrow \Cbb$ be the trace function of a meridian $\mu$, i.e., $\mathrm{tr}_\mu(\chi_\rho) = \mathrm{tr}\,\rho(\mu)$.
	It is known that non-$\mu$-regular characters in $X_K$ are contained in the set of zeros of the differential of $\mathrm{tr}_\mu:X_K \rightarrow \Cbb$, see
	\cite[Proposition 3.26]{porti1997torsion}.
	It follows that the set $\mathrm{tr}_\mu^{-1}(c)$ consists of $\mu$-regular characters for generic $c \in \Cbb$. As a consequence of Theorem \ref{thm:main}, we prove the following.
	
	\begin{theorem} \label{thm:main2} Suppose that $K$ is a hyperbolic two-bridge knot. Then we have
		\[\sum_{\chi_\rho \in \mathrm{tr}_\mu^{-1}(c)} \frac{1}{\mathbb{T}_\mu(\chi_\rho)} = 0\]		
		for generic $c \in \Cbb$.
	\end{theorem}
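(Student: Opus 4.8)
The plan is to substitute the explicit formula of Theorem~\ref{thm:main} and reduce the sum to a residue computation in the variable $u$. Fix a generic $c\in\Cbb$. Since $\mathrm{tr}_\mu(\chi_\rho)=m+m^{-1}$, requiring $\mathrm{tr}_\mu(\chi_\rho)=c$ determines $m$ up to the identification $m\sim m^{-1}$; fixing one representative $m$, the fiber $\mathrm{tr}_\mu^{-1}(c)$ is in bijection with the roots $u_j$ of the one-variable polynomial $\phi_w(m,\cdot)\in\Cbb[u]$ with $u_j\neq0$. For generic $c$ these roots are simple---recall the Riley polynomial has no repeated factor---and each $u_j$ gives a $\mu$-regular character with $m\neq\pm1$ and $\partial_u\phi_w\neq0$, so Theorem~\ref{thm:main} applies to every summand. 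The global sign and the factor $\frac{2(m^2-1)}{m^{\epsilon_k+1}}$ depend on $m$ alone, hence pull out of the sum, and the claim becomes
\[ \sum_j \frac{v_{11}(u_j)\,\phi_v(u_j)}{w_{11}(u_j)\,\partial_u\phi_w(u_j)}=0. \]

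I would then read this as a sum of residues: since $\partial_u\phi_w(u_j)=\phi_w'(u_j)$ at a simple root, the summand is $\mathrm{Res}_{u=u_j}\,\omega$ for the rational $1$-form $\omega=\frac{v_{11}\phi_v}{w_{11}\phi_w}\,du$ on $\Cbb\cup\{\infty\}$. The crucial input, which I expect to be the heart of the argument, is an algebraic identity forced by the palindromic symmetry $\epsilon_i=\epsilon_{p-i}$ of the two-bridge word together with the congruence $kq\equiv\pm1\ (\mathrm{mod}\ 2p)$ defining $v$: namely
\[ w_{11}\equiv -\,b(m)\,v_{11}\,\phi_v \pmod{\phi_w} \]
for an explicit unit $b(m)\in\Cbb[m^{\pm1}]$, so that the function $w_{11}/(v_{11}\phi_v)$ is constant along $X_K$. (For the figure-eight knot one computes the polynomial identity $w_{11}+(m^2-1)\,v_{11}\phi_v=m^2\phi_w$, giving $b(m)=m^2-1$.) Granting this, $v_{11}\phi_v/w_{11}=-1/b(m)$ at every fiber point, and the sum collapses to the $u$-independent multiple $-\frac{1}{b(m)}\sum_j 1/\phi_w'(u_j)$.

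Finally I would invoke the classical fact that $\sum_{P(u_j)=0}1/P'(u_j)=0$ for any one-variable polynomial $P$ of degree at least $2$ with simple roots; equivalently $du/P$ has vanishing residue at $u=\infty$, visible from the partial-fraction expansion $1/P=\sum_j 1/\big(P'(u_j)(u-u_j)\big)$, whose $u^{-1}$-coefficient is $\sum_j 1/P'(u_j)$ and must vanish once $\deg P\geq2$. Here $P=\phi_w(m,\cdot)$ has $u$-degree $(p-1)/2$, and this is exactly where hyperbolicity is used: a hyperbolic two-bridge knot has $p\geq5$, whence $\deg_u\phi_w\geq2$ and the sum is zero. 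Assembling the three steps proves the identity for generic $c$.

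The principal obstacle is establishing the congruence $w_{11}\equiv -b(m)v_{11}\phi_v \pmod{\phi_w}$ in general; a direct matching of coefficients is overdetermined, so it must rest on the structure of two-bridge words. I would approach it by writing $w=v\,s$ as its length-$(k-1)$ prefix $v$ and complementary suffix $s$, using the palindrome relation and the word-reversal anti-automorphism to express the entries of $\rho(s)$ through those of $\rho(v)$, and propagating these via the defining relation $\rho(w)\rho(g_1)=\rho(g_2)\rho(w)$. Should a clean divisibility statement prove elusive, the same conclusion follows from the full residue theorem: one checks $\deg_u(v_{11}\phi_v)\leq k-1\leq p-3$ while $\deg_u(w_{11}\phi_w)=p-1$ (each $g_2^{\pm1}$-letter raising the $u$-degree by one and $\deg_u w_{11}=\deg_u\phi_w=(p-1)/2$), so $\omega$ has no residue at $u=\infty$, and it remains to show that the residues of $\omega$ at the zeros of $w_{11}$ cancel in total---again a consequence of the two-bridge symmetry. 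Either route isolates the same combinatorial identity as the decisive step.
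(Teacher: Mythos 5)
Your opening reduction---pulling the $m$-dependent factor out of the sum and aiming to prove $\sum_j v_{11}(u_j)\phi_v(u_j)/\bigl(w_{11}(u_j)\,\phi_w'(u_j)\bigr)=0$ by a residue/Euler--Jacobi argument in the variable $u$---is exactly how the paper begins. But the identity you place at the heart of the argument, $w_{11}\equiv -b(m)\,v_{11}\phi_v \pmod{\phi_w}$ (equivalently, that $v_{11}\phi_v/w_{11}$ is \emph{constant} on each fiber), is false in general. It holds for the figure-eight knot only because there $\deg_u\phi_w=2$, so the relevant remainder is a constant for degree reasons. Already for $5_2$, i.e.\ $(p,q)=(7,3)$, where $v=g_1g_2g_1^{-1}g_2^{-1}$ and $y=g_1g_2$, a direct computation gives $v_{11}\phi_v\equiv m^2(u+1-m^2)\pmod{\phi_w}$ while $w_{11}\equiv(1-m^{-2})u^2+\cdots\pmod{\phi_w}$; these are not proportional. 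There is also a structural reason your mechanism cannot be right: the only way hyperbolicity enters your argument is through ``$\deg_u\phi_w\geq2$,'' but the $(2,5)$-torus knot $(p,q)=(5,1)$ also has $\deg_u\phi_w=2$ (with $v_{11}\phi_v=1$), and there the inverse sum equals $-2q\neq 0$ by the remark following Theorem~\ref{thm:main2}; your scheme would force it to vanish. Finally, your fallback---``the residues of $\omega$ at the zeros of $w_{11}$ cancel by two-bridge symmetry''---is, via the global residue theorem, merely a restatement of the theorem, not a proof of it.

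What is actually true, and what the paper proves, is weaker than your congruence and avoids introducing poles at $w_{11}=0$ altogether. Writing $w=vy$ with $y=g_1^{\epsilon_k}\cdots g_2^{\epsilon_{p-1}}$ gives $v_{11}=w_{11}y_{22}-w_{12}y_{21}$, and on the fiber $\phi_w=0$ one has $w_{12}=w_{11}/(m-m^{-1})$, so $v_{11}/w_{11}=-(y_{21}-(m-m^{-1})y_{22})/(m-m^{-1})$: the denominator $w_{11}$ cancels identically and the summand becomes $g/\phi_w'$ for a genuine polynomial $g$. The decisive combinatorial step is then the degree bound $(y_{21}-(m-m^{-1})y_{22})\phi_v=\alpha\phi_w+h$ with $\deg_u h\leq\deg_u\phi_w-2$, i.e.\ only the top \emph{two} $u$-coefficients need to match a multiple of $\phi_w$, not all of them. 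This is established by an induction computing the two leading terms of $\rho$ on alternating words (Lemma~\ref{lem:key}) together with a case analysis of $(\epsilon_{k-1},\epsilon_k,\epsilon_{k+1})$ using $kq\equiv\pm1\pmod{2p}$ and $q\neq\pm1$ (Claim~\ref{claim5})---this is where hyperbolicity genuinely enters, not through the degree of $\phi_w$. Euler--Jacobi then finishes. So your outline identifies the correct analytic endgame, but the key algebraic identity you propose is the wrong (and overdetermined, hence false) statement; the correct one requires the $w=vy$ cancellation and the two-leading-term computation.
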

	\begin{remark} 
		It is proved in \cite{schubert1954numerische} that a two-bridge knot $K$ is hyperbolic if $q \neq \pm 1$ and is a torus knot of the type $(2,n)$, otherwise.
		For a non-hyperbolic two-bridge knot $(q=\pm1)$ we have $k=1$  and
		\[ \Tbb_\mu(\chi_\rho)=\frac{m^{q+1}\, w_{11} }{2(m^2-1)}  \frac{\partial \phi_w}{\partial u}\]		
		from Theorem \ref{thm:main}.	One checks that the inverse sum of adjoint torsions as in Theorem \ref{thm:main2} is numerically $-2q$ for generic $c\in\Cbb$.
	\end{remark}

	%
	%

	\subsection{A proof of Theorem \ref{thm:main}} \label{sec:proof1}
	Let $Y$ be the 2-dimensional cell complex corresponding to the presentation \eqref{eqn:prsn} of the knot group $\pi_1(M)$. 
	Recall that $Y$ has one 0-cell $p$, two 1-cells $g_1$, $g_2$, and one 2-cell $r$ and that we have the basis $\mathbf{c}^\ast$ of $C^\ast(Y;\g_\rho)$ as in the equation \eqref{eqn:cbasis}:
	$\mathbf{c}^0 = (p^1,p^2,p^3),\ \mathbf{c}^1=(g_1^1,g_1^2,g_1^3,g_2^1,g_2^2,g_2^3),\
	\mathbf{c}^2=(r^1,r^2,r^3).$
	
	Let $F_2$ be the free group generated by $g_1$ and $g_2$. We  consider a representation $\overline{\rho} : F_2 \rightarrow M_{2,2} (\Zbb[t_1^{\pm1},t_2^{\pm1},t_3])$ given by
	\begin{equation} \label{eqn:assign}
	\overline{\rho}(g_1) = \begin{pmatrix} t_1 & 1 \\ 0 & t_1^{-1}\end{pmatrix}, \ \overline{\rho}(g_2) =\begin{pmatrix} t_2 & 0 \\ -t_3 & t_2^{-1} \end{pmatrix}
	\end{equation} 
	and define one-parameter families $\rho^0_t,\rho^1_t, \rho^2_t,\rho^3_t : F_2 \rightarrow \G$ from $\overline{\rho}$ by letting
	\begin{equation*}
		\begin{array}{llll}
			\rho^0_t :&  t_1=t, & t_2=t,& t_3=u,\\
			\rho^1_t :&  t_1=t,& t_2=m,& t_3=u,\\
			\rho^2_t :&  t_1=m,& t_2=t,& t_3=u,\\
			\rho^3_t :&  t_1=m,& t_2=m,& t_3=t.		
		\end{array}
	\end{equation*}
	Clearly, $\rho^i_t$ coincides with $\rho$ at $t=m$ for $i=0,1,2$ and at $t=u$ for $i=3$. We thus have $A_{\rho_t^i}\in C^1(Y;\g_\rho)$ defined as in the equation \eqref{eqn:A} for $i=0,1,2,3$. 
	With respect to the basis $\mathbf{c}^1$, 
	\begin{equation} \label{eqn:A1}
	A_{\rho_t^0} = \begin{pmatrix} -1 \\ m^{-1} \\ 0 \\ 0 \\  m^{-1} \\ -u m^{-2} \end{pmatrix}, \
	A_{\rho_t^1} = \begin{pmatrix} -1 \\  m^{-1} \\ 0 \\ 0 \\ 0 \\0 \end{pmatrix}, \
	A_{\rho_t^2} = \begin{pmatrix} 0 \\0  \\ 0 \\ 0 \\ m^{-1} \\ -um^{-2} \end{pmatrix}, \ 
	A_{\rho_t^3} = \begin{pmatrix} 0 \\0  \\ 0 \\ 0 \\ 0\\ - m^{-1} \end{pmatrix}.
	\end{equation}
	Note that we have $A_{\rho^0_t}=A_{\rho^1_t}+A_{\rho^2_t}$ which is also clear from the definition.
	
	Recall that the Riley polynomial is equal to $\phi_w = w_{11} -(m-m^{-1})w_{12}$ where $w_{ij} \in \Zbb[m^{\pm1},u]$ is the $(i,j)$-entry of $\rho(w)$. Similarly, we let $\overline{\phi}_w :=\overline{w}_{11} -(t_1-t_1^{-1})\overline{w}_{12}$ where $\overline{w}_{ij} \in \Zbb[t_1^{\pm1},t_2^{\pm1},t_3]$ is the $(i,j)$-entry of $\overline{\rho}(w)$.
	
	\begin{lemma} \label{lem:adelta}We have
		\begin{align}
			(\delta^1 A_{\rho^0_t})(\widetilde{r})&=\frac{\partial \phi_w}{\partial m} M \label{eqn:azero},\\
			(\delta^1 A_{\rho^1_t})(\widetilde{r})&=\begin{pmatrix} m^{-1} & 0 \\ 0 & -m^{-1} \end{pmatrix}+  \alpha_1 M  +\begin{pmatrix} 0 & 0 \\ \ast & 0 \end{pmatrix} \label{eqn:aone},\\
			(\delta^1 A_{\rho^2_t})(\widetilde{r})&=\begin{pmatrix} -  m^{-1} & 0 \\ 0 & m^{-1} \end{pmatrix}+\alpha_2 M + \begin{pmatrix} 0 & 0 \\ \ast & 0 \end{pmatrix} \label{eqn:atwo},\\
			(\delta^1 A_{\rho^3_t})(\widetilde{r})&=\frac{\partial \phi_w}{\partial u} M \label{eqn:athree}
		\end{align}
	where 
	\begin{equation*}
		M=\begin{pmatrix}
			w_{11} u -w_{21}m^{-1}   & w_{11} m \\
			w_{21}u+w_{22}um^{-1}& w_{21}m
		\end{pmatrix},\ \alpha_1= \left.\frac{\partial \overline{\phi}_w}{\partial t_1}  \right|_{t_1=t_2=m,t_3=u},\ \alpha_2=\left.\frac{\partial \overline{\phi}_w}{\partial t_2}  \right|_{t_1=t_2=m,t_3=u}.
	\end{equation*}
	Note that we have $\alpha_1+\alpha_2 = \frac{\partial \phi_w}{\partial m}$ from the definition (also from the fact that $A_{\rho^0_t}=A_{\rho^1_t}+A_{\rho^2_t}$).
	\end{lemma}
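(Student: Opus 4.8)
The plan is to feed each one-parameter family into the Weil--Fox identity \eqref{eqn:r}, which turns the problem into a genuine matrix derivative. Writing $G_i:=\rho(g_i)$ and $W:=\rho(w)$, I would compute the three partials $D_j:=\frac{\partial}{\partial t_j}\overline\rho(r)$ at $(t_1,t_2,t_3)=(m,m,u)$, where $r=wg_1w^{-1}g_2^{-1}$; the four assertions \eqref{eqn:azero}--\eqref{eqn:athree} are then the values for $D_1+D_2$, $D_1$, $D_2$, $D_3$ (and the noted relation $A_{\rho^0_t}=A_{\rho^1_t}+A_{\rho^2_t}$ is simply $D_1+D_2$). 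Differentiating $\overline\rho(r)=\overline W\,\overline G_1\,\overline W^{-1}\,\overline G_2^{-1}$ by the product rule and repeatedly invoking the group relation $WG_1W^{-1}=G_2$ (valid at the evaluation point because $\rho$ factors through $\pi_1(M)$) collapses the four terms to
\[ D_j=\bigl(\Omega_j-\mathrm{Ad}_{G_2}\Omega_j\bigr)+\mathrm{Ad}_{W}\!\Bigl(\tfrac{\partial\overline G_1}{\partial t_j}\,G_1^{-1}\Bigr)-\tfrac{\partial\overline G_2}{\partial t_j}\,G_2^{-1},\qquad \Omega_j:=\tfrac{\partial\overline W}{\partial t_j}\,W^{-1}\in\g. \]
The two rightmost terms involve only $\overline G_1,\overline G_2$ and are computed by hand; they produce the diagonal blocks $\pm\,\mathrm{diag}(m^{-1},-m^{-1})$ and the lower-left $\ast$ entries of \eqref{eqn:aone}--\eqref{eqn:atwo}, and reduce to $m^{-1}e_3$ when $j=3$.

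The linchpin is the identity $\phi_g=\varepsilon_1^{T}\rho(g)\,\eta$, where $\varepsilon_1=(1,0)^{T}$, $\varepsilon_2=(0,1)^{T}$ is the standard basis of $\Cbb^2$, the vector $\eta:=(1,\,m^{-1}-m)^{T}$ is the $m^{-1}$-eigenvector of $G_1$, and $\varepsilon_1^{T}$ is the left $m$-eigenvector of $G_2$. On the character variety this yields
\[ W\eta=\tfrac{u}{w_{21}}\,\varepsilon_2,\qquad\text{equivalently}\qquad W^{-1}\varepsilon_2=\tfrac{w_{21}}{u}\,\eta, \]
whose first coordinate is precisely $\phi_w=0$ and whose second coordinate I would check from $\det W=1$ and the $(2,1)$-entry of $WG_1W^{-1}=G_2$. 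Substituting $W^{-1}\varepsilon_2=\frac{w_{21}}{u}\eta$ into $(\Omega_j)_{12}=\varepsilon_1^{T}\frac{\partial\overline W}{\partial t_j}W^{-1}\varepsilon_2$ rewrites the upper-right entry of $\Omega_j$ as $\frac{w_{21}}{u}\,\varepsilon_1^{T}\frac{\partial\overline W}{\partial t_j}\eta=\frac{w_{21}}{u}\,\frac{\partial\overline\phi_w}{\partial t_j}$ for $j=2,3$ (for $j=1$ an extra explicit term appears, from the $t_1$-dependence of $\eta$ inside $\overline\phi_w=\overline w_{11}-(t_1-t_1^{-1})\overline w_{12}$). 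Since a direct expansion shows the $(1,1),(1,2),(2,2)$ entries of $\Omega_j-\mathrm{Ad}_{G_2}\Omega_j$ depend only on $(\Omega_j)_{12}$, combining these facts with the variety relation $\frac{w_{21}}{u}(1-m^2)=w_{11}m$ reproduces the matching entries of $\alpha_jM$ and $\frac{\partial\phi_w}{\partial u}M$, determining $M$ up to its lower-left entry.

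For the two ``clean'' identities \eqref{eqn:azero} and \eqref{eqn:athree} I would give a conceptual argument that bypasses the entrywise bookkeeping. Restricted to the slice $\{t_1=t_2\}$, the representation condition $\overline\rho(r)=I$ is cut out set-theoretically by $\overline\phi_w=0$ (Riley's theorem, \cite{riley1984nonabelian}), and $\overline\phi_w$ is square-free by \cite[Lemma 3]{riley1984nonabelian}; hence the Nullstellensatz provides a factorization $\overline\rho(r)-I=\overline\phi_w\cdot M_0$ with $M_0$ a matrix of regular functions on the slice. The directions $\rho^0_t$ and $\rho^3_t$ are tangent to this slice, so differentiating and restricting to $\overline\phi_w=0$ kills the $\overline\phi_w\,\partial M_0$ term and gives $D_1+D_2=\frac{\partial\phi_w}{\partial m}M$ and $D_3=\frac{\partial\phi_w}{\partial u}M$ at one stroke, with $M=M_0|_{\overline\phi_w=0}$; this also explains structurally why the diagonal and $\ast$ terms, present for the transverse directions $t_1,t_2$ individually, disappear here.

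It then remains to match $M_0$ with the matrix $M$ in the statement, and this is where I expect the real work. The $(1,2)$ and $(2,2)$ entries fall out immediately from the eigenvector identity, and the $(1,1)$ entry follows since $D_3\in\g$ forces $\mathrm{tr}\,M=0$, consistent with $w_{11}u+w_{21}(m-m^{-1})=w_{21}^2\phi_w=0$; but the lower-left entry $w_{21}u+w_{22}um^{-1}$ genuinely requires the full content of $WG_1W^{-1}=G_2$ rather than just $\phi_w=0$, which I would extract by computing $(\Omega_3)_{11},(\Omega_3)_{21}$ using $W^{-1}\varepsilon_1=(w_{22},-w_{21})^{T}$ and the intertwiner $WG_1=G_2W$. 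The most delicate bookkeeping is \eqref{eqn:aone}: one must show that the $t_1$-correction to $(\Omega_1)_{12}$ and the $\mathrm{Ad}_W$-term conspire so that the coefficient of $M$ is exactly $\alpha_1$ while the remainder is diagonal-plus-lower-left. Once \eqref{eqn:aone} is established, \eqref{eqn:atwo} follows by subtraction as $D_2=(D_1+D_2)-D_1$.
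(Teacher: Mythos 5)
Your proposal is correct in outline but organizes the computation quite differently from the paper. The paper's proof consists of a single exact closed-form identity for $\overline{\rho}(r)=\overline{\rho}(wg_1w^{-1}g_2^{-1})$ as a polynomial matrix in $(t_1,t_2,t_3)$, valid identically and not just on the variety; all four formulas, the matrix $M$, and the lower-left $\ast$ terms are then read off by differentiating that one identity and using $\phi_w=0$ and $w_{12}u+w_{21}=0$. You instead work entirely on the representation variety: the Weil-type collapse $D_j=\Omega_j-\mathrm{Ad}_{G_2}\Omega_j+\mathrm{Ad}_W\bigl(\partial_j\overline{G}_1\,G_1^{-1}\bigr)-\partial_j\overline{G}_2\,G_2^{-1}$ is correct (it uses $WG_1W^{-1}=G_2$, which holds here since $\rho$ is a representation), the eigenvector identities $\phi_g=\varepsilon_1^{T}\rho(g)\eta$ and $W^{-1}\varepsilon_2=\tfrac{w_{21}}{u}\eta$ do hold on $\{\phi_w=0\}$ (the latter via $\det W=1$ and $w_{21}=-uw_{12}$), and the Nullstellensatz factorization $\overline{\rho}(r)-I=\overline{\phi}_w M_0$ on the slice $t_1=t_2$ is legitimate granted that $\phi_w$ is square-free in $\Cbb[m^{\pm1},u]$ and that $\rho(r)=I$ on all of $V(\phi_w)\cap\{m\neq0\}$ — both available from Riley and already invoked elsewhere in the paper. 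What your route buys is a structural explanation of why the coefficient of $M$ is a $\phi_w$-derivative and why \eqref{eqn:azero} and \eqref{eqn:athree} carry no diagonal or lower-left correction; what it costs is that the explicit $M$ and the verification of \eqref{eqn:aone} still require essentially the same amount of entrywise computation that the paper front-loads into its closed-form identity. I checked the most delicate deferred step and it does close: with $(\Omega_1)_{12}=\tfrac{w_{21}}{u}\bigl(\alpha_1+(1+m^{-2})w_{12}\bigr)$ one gets $(1-m^2)(\Omega_1)_{12}=mw_{11}\alpha_1+(m^2-m^{-2})w_{12}^2$, which is exactly cancelled down to $\alpha_1M_{12}$ by $\bigl[\mathrm{Ad}_W(m^{-1}e_2-e_1)\bigr]_{12}=-(m^2-m^{-2})w_{12}^2$. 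One small slip: $\mathrm{tr}\,M=w_{11}u+w_{21}(m-m^{-1})$ equals $u\phi_w$, not $w_{21}^2\phi_w$; this is harmless since you only use that it vanishes on the variety.
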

	\begin{proof} 	A straightforward computation shows that
		\begin{align*}
			\overline{\rho}(r)  &=\overline{\rho}(w g_1 w^{-1}g_2^{-1}) \\
			& =\begin{pmatrix}
				t_1t_2^{-1} & 0 \\
				0 & t_1^{-1}t_2
			\end{pmatrix} + \overline{\phi}_w
			\begin{pmatrix}
				\overline{w}_{11} t_3 - \overline{w}_{21} t_2^{-1} & \overline{w}_{11} t_2 \\
				\overline{w}_{21} t_3 +\overline{w}_{22} t_3 t_2^{-1} & \overline{w}_{21} t_2 
			\end{pmatrix} \\
			& \quad \quad + \left( t_1^{-1}-t_2^{-1} \right)	
			\begin{pmatrix}
				0 & 0 \\
				t_3 & 0
			\end{pmatrix} - ( \overline{w}_{12} t_3+\overline{w}_{21})
			\begin{pmatrix}
				0 & 0 \\
				\overline{w}_{21}-(t_1-t_1^{-1})\overline{w}_{22}	 & 0
			\end{pmatrix}.
		\end{align*}
		Taking $t_1=t_2=m$ and $t_3=u$, the above equation reduces to the equation
		\begin{equation} \label{eqn:R}
		\rho(r) = \rho(w g_1 w^{-1} g_2^{-1})=I+\phi_w \begin{pmatrix}
		w_{11} u -w_{21}m^{-1}   & w_{11} m \\
		w_{21}u+w_{22}um^{-1}& w_{21}m
		\end{pmatrix}=I+\phi_w M.
		\end{equation}
		Note that the fact that $w_{12} u + w_{21}=0$ is used, see \cite{riley1984nonabelian}. We then obtain the equations \eqref{eqn:azero}--\eqref{eqn:athree} immediately  from the equation \eqref{eqn:r}. For instance,
		\begin{align*}
			(\delta^1 A_{\rho^0_t})(\widetilde{r}) &= \left. \frac{d}{dt} \right |_{t=m} \rho^0_t(r) = \frac{\partial \rho(r)}{\partial m} = \frac{\partial \phi_w}{\partial m} M + \phi_w \frac{\partial M}{\partial m} = \frac{\partial \phi_w}{\partial m} M,\\
		(\delta^1 A_{\rho^1_t})(\widetilde{r})&=\left. \frac{d}{dt} \right |_{t=m} \rho^1_t(r) =\left.\frac{\partial \overline{\rho}(r)}{\partial t_1} \right|_{t_1=t_2=m,t_3=u} \\
		&=\begin{pmatrix} m^{-1} & 0 \\ 0 & - m^{-1} \end{pmatrix}+\left. \frac{\partial \overline{\phi}_w}{\partial t_1} \right|_{t_1=t_2=m,t_3=u} M+ \begin{pmatrix} 0 & 0 \\ \ast & 0 \end{pmatrix}.
	\end{align*}
	We compute $(\delta^1 A_{\rho^2_t})(\widetilde{r})$ and $(\delta^1 A_{\rho^3_t})(\widetilde{r})$, similarly.

%
%
%
%
%
%
	\end{proof}
	
	\begin{claim} \label{claim1} The following vector $B\in C^1(Y;\g_\rho)$ represents a generator of $H^1(Y;\g_\rho)$.
		\begin{equation} \label{eqn:claim}
		B=A_{\rho^0_t} - \frac{ \partial \phi_w/\partial m}{ \partial \phi_w/\partial u} A_{\rho^3_t} =
		A_{\rho^1_t} + A_{\rho^2_t} - \frac{ \partial \phi_w/\partial m}{ \partial \phi_w/\partial u} A_{\rho^3_t}.
		\end{equation}
	\end{claim}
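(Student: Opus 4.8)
The plan is to exploit the $\mu$-regularity hypothesis, which guarantees that $H^1(Y;\g_\rho)$ is one-dimensional and that the map $F_1 : H^1(Y;\g_\rho) \to \Cbb$, $F_1(v) = \langle P, v(\widetilde\mu)\rangle_\g$, is a linear isomorphism. Consequently, to prove that $B$ represents a generator it will suffice to check two things: that $B$ is a cocycle, i.e. $\delta^1 B = 0$, so that it defines a class $[B] \in H^1(Y;\g_\rho)$; and that $F_1(B) \neq 0$, which forces $[B]$ to be nonzero and hence, by one-dimensionality, a generator. Since $F_1$ vanishes on every coboundary, the second condition simultaneously certifies that $B \notin \mathrm{Im}\,\delta^0$.

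The cocycle condition is immediate from Lemma \ref{lem:adelta}. Because $C^2(Y;\g_\rho)$ is spanned by the single $2$-cell $\widetilde r$, the cochain $\delta^1 B$ is determined by its value there, and
\[(\delta^1 B)(\widetilde r) = (\delta^1 A_{\rho^0_t})(\widetilde r) - \frac{\partial\phi_w/\partial m}{\partial\phi_w/\partial u}(\delta^1 A_{\rho^3_t})(\widetilde r) = \frac{\partial\phi_w}{\partial m} M - \frac{\partial\phi_w/\partial m}{\partial\phi_w/\partial u}\,\frac{\partial\phi_w}{\partial u}\,M = 0,\]
where the coefficient is well-defined precisely because of the standing hypothesis $\partial\phi_w/\partial u \neq 0$. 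Thus the two multiples of $M$ cancel and $B$ is a cocycle.

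For the non-vanishing of $F_1(B)$, I would take the meridian $\mu = g_1$, so that $B(\widetilde\mu) = B(\widetilde{g}_1) = A_{\rho^0_t}(\widetilde{g}_1) - \frac{\partial\phi_w/\partial m}{\partial\phi_w/\partial u} A_{\rho^3_t}(\widetilde{g}_1)$. From the explicit formula \eqref{eqn:A1}, the $g_1$-block of $A_{\rho^3_t}$ vanishes, while $A_{\rho^0_t}(\widetilde{g}_1) = -e_1 + m^{-1} e_2 = \begin{pmatrix} m^{-1} & -1 \\ 0 & -m^{-1}\end{pmatrix}$; hence $B(\widetilde{g}_1) = \begin{pmatrix} m^{-1} & -1 \\ 0 & -m^{-1}\end{pmatrix}$. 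Since $m \neq \pm 1$, the matrix $\rho(g_1)$ is regular semisimple, so its centralizer in $\g$ is one-dimensional and the invariant vector $P$ must be a nonzero scalar multiple of the trace-free part $\rho(g_1) - \tfrac{1}{2}(m+m^{-1})I = \begin{pmatrix} \tfrac{m-m^{-1}}{2} & 1 \\ 0 & -\tfrac{m-m^{-1}}{2}\end{pmatrix}$. Evaluating the Killing form then makes $F_1(B) = \langle P, B(\widetilde{g}_1)\rangle_\g$ a nonzero multiple of $(m-m^{-1})m^{-1} = 1 - m^{-2}$, which is nonzero exactly because $m \neq \pm 1$. This completes the argument.

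The point requiring care, rather than a genuine obstacle, is the identification of the peripheral invariant vector $P$ and the verification that $F_1$ is genuinely defined on $H^1$ of the presentation complex $Y$ rather than of $M$; both rest on the $\mu$-regularity machinery recalled in Section \ref{sec:basic} and on the fact that $\mu = g_1$ is peripheral. Once $P$ is pinned down as the trace-free part of $\rho(g_1)$, everything reduces to the two short computations above: the cancellation of the $M$-terms and the non-degeneracy of the Killing pairing. Neither calculation is delicate, so the real content of the claim is the structural observation that the one-parameter families $\rho^0_t$ and $\rho^3_t$ fail the relation $r$ in the \emph{same} direction $M$, which is exactly what allows a single linear combination to repair the cocycle condition while remaining non-exact.
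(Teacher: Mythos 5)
Your proof is correct, and its first half coincides with the paper's: both verify $\delta^1 B=0$ by cancelling the two multiples of the matrix $M$ supplied by Lemma \ref{lem:adelta}, using $\partial\phi_w/\partial u\neq 0$ to make the coefficient well defined. Where you genuinely diverge is in certifying $B\notin\mathrm{Im}\,\delta^0$. The paper does this by brute force, comparing the explicit coordinate vector of $B$ obtained from \eqref{eqn:A1} against the three columns of the matrix $\delta^0$ in \eqref{eqn:d0}; you instead evaluate the peripheral functional $F_1(\cdot)=\langle P,\cdot(\widetilde{g}_1)\rangle_\g$, observe that it annihilates coboundaries because $P$ is $\mathrm{Ad}_{\rho(g_1)}$-invariant and the Killing form is $\mathrm{Ad}$-invariant, and compute $F_1(B)=4(1-m^{-2})\neq 0$ since $m\neq\pm 1$. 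This is legitimate and, in the context of the whole argument, economical: the quantity $4(1-m^{-2})$ is exactly what the paper computes anyway in Claim \ref{claim3} to normalize $\widetilde{\mathbf{h}}^1$, so your route obtains non-exactness and the normalization in one stroke, at the price of invoking the peripheral-invariance machinery slightly earlier than the paper does. Two minor remarks: you do not actually need $F_1$ to be an isomorphism (part of $\mu$-regularity), only that it descends to $H^1(Y;\g_\rho)$ --- which follows from invariance alone --- together with $\dim_\Cbb H^1(Y;\g_\rho)=1$; and your identification of $P$ as the traceless part of $\rho(g_1)$ (forced because $\rho(g_1)$ is regular semisimple for $m\neq\pm1$) agrees with the explicit $P$ the paper fixes just before Claim \ref{claim3}.
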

	\begin{proof} Lemma \ref{lem:adelta} immediately implies that $B \in \mathrm{Ker}\,\delta^1$.
		 The fact that $B \notin \mathrm{Im}\,\delta^0$ can be easily checked from the explicit expressions of $A_{\rho^i_t}$ (see the equation \eqref{eqn:A1}) and $\delta^0$ with respect to the basis $\mathbf{c}^\ast$
		\begin{equation}\label{eqn:d0}
		\delta^0 = \begin{pmatrix} m^2-1 & -2m & -1 \\ 0 & 0 & 	m^{-1} \\ 0 & 0 & m^{-2}-1 \\ m^2-1 & 0&0 \\  um	 & 0 & 0 \\ -u^2 & -2um^{-1} & m^{-2}-1 \end{pmatrix}.
		\end{equation}
	\end{proof}
	
	On the zero set of the Riley polynomial $\phi_w=w_{11}-(m-m^{-1})w_{12}$ with $m \neq \pm1$, we have $w_{11} \neq 0$ and $w_{12} \neq 0$. Otherwise, we have $w_{11}=w_{12}=0$ which contradicts to $\det \rho(w)=1$.  
	
	Recall Section \ref{sec:basic} that to compute the adjoint torsion we need to specify  a sequence $b^i$ of vectors in $C^i(Y;\g_\rho)$ such that $\delta^i(b^i)$ is a basis of $\mathrm{Im}\ \delta^i$ for $i=0,1$ and a basis $\mathbf{h}^i$ of $H^i(Y;\g_\rho)$ satisfying $F_i(\mathbf{h}^i)=1$ for $i=1,2$.
	\begin{claim} \label{claim2}
		The following choice of $b^i$ satisfies that $\delta^i(b^i)$ is a basis of $\mathrm{Im}\,\delta^i$. 	\begin{equation}\label{eqn:b}
		b^0 = \mathbf{c}^0, \quad b^1=(\beta_1  A_{\rho_t^1}+ \beta_2 A_{\rho_t^2},\ A_{\rho_t^3})
		\end{equation}
		for any $\beta_1 \neq \beta_2 \in \Cbb$.
	\end{claim}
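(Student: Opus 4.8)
The plan is to show that the two vectors $\delta^0(\mathbf{c}^0)$ from $b^0$ and the two vectors $\delta^1(b^1) = (\beta_1 (\delta^1 A_{\rho_t^1}) + \beta_2 (\delta^1 A_{\rho_t^2}), \ \delta^1 A_{\rho_t^3})$ from $b^1$ each form a basis of the respective image. For $i=0$, the map $\delta^0$ is injective on $C^0(Y;\g_\rho)$ because $\rho$ is irreducible (equivalently $H^0(Y;\g_\rho)=0$ by $\mu$-regularity), so $\delta^0(\mathbf{c}^0)$ is automatically a basis of $\mathrm{Im}\,\delta^0$; I would verify injectivity directly from the explicit matrix \eqref{eqn:d0}, noting that its $3\times 3$ minors are nonzero when $m\neq\pm 1$ and $u\neq 0$, which is precisely the standing hypothesis. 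Since $\dim\mathrm{Im}\,\delta^0 = 3$ and $\mathbf{c}^0$ has three elements, this is immediate once injectivity is established.

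For $i=1$, the key is that $\dim \mathrm{Im}\,\delta^1 = 2$. This follows because $H^2(Y;\g_\rho)$ is $1$-dimensional (by $\mu$-regularity) and $C^2$ is $3$-dimensional, so the rank of $\delta^1$ is $3-1=2$. Thus I must check that the two vectors $\delta^1(\beta_1 A_{\rho_t^1}+\beta_2 A_{\rho_t^2})$ and $\delta^1 A_{\rho_t^3}$ are linearly independent in $C^2(Y;\g_\rho)$. Using Lemma \ref{lem:adelta}, I compute
\begin{align*}
\delta^1(\beta_1 A_{\rho_t^1}+\beta_2 A_{\rho_t^2})(\widetilde r) &= \beta_1\begin{pmatrix} m^{-1} & 0 \\ 0 & -m^{-1}\end{pmatrix} + \beta_2 \begin{pmatrix} -m^{-1} & 0 \\ 0 & m^{-1}\end{pmatrix} + (\beta_1\alpha_1+\beta_2\alpha_2) M + \begin{pmatrix} 0 & 0 \\ \ast & 0 \end{pmatrix},\\
\delta^1 A_{\rho_t^3}(\widetilde r) &= \frac{\partial \phi_w}{\partial u}\, M.
\end{align*}
The second vector is a nonzero multiple of $M$ since $\frac{\partial\phi_w}{\partial u}\neq 0$ by hypothesis. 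The first vector has a genuine diagonal contribution $(\beta_1-\beta_2)\,\mathrm{diag}(m^{-1},-m^{-1})$ that is not proportional to $M$, together with the $M$-term and a strictly-lower-triangular term. The plan is to argue that because $\beta_1\neq\beta_2$ and $m\neq 0$, the first vector cannot be a scalar multiple of $M$: its $(1,1)$ and $(2,2)$ diagonal entries carry an asymmetric piece $\pm(\beta_1-\beta_2)m^{-1}$ coming from the diagonal matrices, whereas $M$'s diagonal entries $w_{11}u - w_{21}m^{-1}$ and $w_{21}m$ satisfy no such forced antisymmetry, so a suitable linear combination of the entries detects independence.

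The main obstacle I anticipate is making the independence argument of the $i=1$ case rigorous rather than merely plausible, since both candidate vectors contain an $M$-component and one must rule out accidental proportionality. The cleanest route is to reduce modulo the span of $M$: working in the quotient $C^2(Y;\g_\rho)/\langle M\rangle$, the image of $\delta^1 A_{\rho_t^3}$ is zero, so it suffices to show the image of $\delta^1(\beta_1 A_{\rho_t^1}+\beta_2 A_{\rho_t^2})$ is nonzero there, i.e.\ that $(\beta_1-\beta_2)\,\mathrm{diag}(m^{-1},-m^{-1}) + \begin{pmatrix} 0 & 0 \\ \ast & 0\end{pmatrix}$ is not a multiple of $M$. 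Since the lower-triangular correction does not touch the diagonal and $(\beta_1-\beta_2)m^{-1}\neq 0$, this vector has a nonzero off-$M$ component precisely when $M$ is not itself diagonal-antisymmetric; one verifies from the explicit form of $M$ (and $w_{11}\neq 0$, established just before the claim) that $M$ has a nonzero $(1,2)$-entry $w_{11}m$, so it is not purely diagonal and the two vectors remain independent. Thus $\delta^1(b^1)$ spans the $2$-dimensional $\mathrm{Im}\,\delta^1$, completing the proof.
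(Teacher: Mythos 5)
Your proof is correct, but it takes a genuinely different route from the paper's. The paper argues by contradiction inside $C^1(Y;\g_\rho)$: if $\delta^1(b^1)$ failed to span the $2$-dimensional $\mathrm{Im}\,\delta^1$, some nontrivial combination $C$ of $\beta_1 A_{\rho_t^1}+\beta_2 A_{\rho_t^2}$ and $A_{\rho_t^3}$ would lie in $\ker\delta^1$; since the class of $B$ from Claim \ref{claim1} generates $H^1(Y;\g_\rho)$, this would force a vector $a_1A_{\rho_t^1}+a_2A_{\rho_t^2}+a_3A_{\rho_t^3}$ with $a_1\neq a_2$ into $\mathrm{Im}\,\delta^0$, which is then ruled out by inspecting \eqref{eqn:A1} and \eqref{eqn:d0}. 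You instead work downstream in $C^2(Y;\g_\rho)$ and verify directly that the two image vectors are linearly independent using only Lemma \ref{lem:adelta}: the second is the nonzero multiple $\frac{\partial\phi_w}{\partial u}M$ of $M$, and the first, reduced modulo $\langle M\rangle$, has vanishing $(1,2)$-entry but nonzero $(1,1)$-entry $(\beta_1-\beta_2)m^{-1}$, whereas $M_{12}=w_{11}m\neq 0$; hence neither is a scalar multiple of the other. Your version is more self-contained --- it does not invoke Claim \ref{claim1} or any analysis of $\mathrm{Im}\,\delta^0$ --- and it replaces the paper's unelaborated step ``one easily checks that such a vector can not exist'' with a two-entry computation already available from Lemma \ref{lem:adelta}. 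One caveat: your intermediate remark that the diagonal entries of $M$ ``satisfy no forced antisymmetry'' is actually false, since on the zero locus of the Riley polynomial $\mathrm{tr}\,M = w_{11}u+w_{21}(m-m^{-1}) = u\phi_w = 0$ (using $w_{12}u+w_{21}=0$), so $M$ is traceless and its diagonal \emph{is} antisymmetric; this does no damage, however, because your final argument correctly hinges on the off-diagonal entry $M_{12}=w_{11}m\neq 0$ (valid since $w_{11}\neq0$ and $m\neq 0$) rather than on any asymmetry of the diagonal. The $i=0$ part is the same in both treatments (injectivity of $\delta^0$), except that your aside that ``its $3\times3$ minors are nonzero'' overstates matters --- only some are, e.g.\ the minor on rows $1,2,4$ of \eqref{eqn:d0}, which equals $-2(m^2-1)\neq0$ for $m\neq\pm1$, and that single nonvanishing minor is all you need.
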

	\begin{proof} 	It is clear that $\delta^0(b^0)$ is a basis of $\mathrm{Im} \,\delta^0$, since $\delta^0$ is injective.
		
		Since $\dim_\Cbb H^2(Y;\g_\rho)=1$, we have $\dim_\Cbb \mathrm{Im} \,\delta^1=2$.
		Suppose $\delta^1(b^1)$ fails to span $\mathrm{Im}\,\delta^1$. Then there is a linear combination $C$ of $\beta_1 A_{\rho_t^1} + \beta_2 A_{\rho_t^2}$ and $A_{\rho_t^3}$ such that $\delta^1(C)=0$. Recall that $w_{11} \neq 0$, $\frac{\partial \phi_w}{\partial u}\neq0$, and 
		\begin{equation} \label{eqn:r3}
		(\delta^1 A_{\rho_t^3})(\widetilde{r}) = \dfrac{\partial \phi_w}{\partial u}\begin{pmatrix}
			w_{11} u -w_{21}m^{-1}   & w_{11} m \\
			w_{21}u+w_{22}um^{-1}& w_{21}m
		\end{pmatrix}.
		\end{equation} It follows that $\delta^1(A_{\rho_t^3}) \neq 0$ and thus $C$ has a non-trivial coefficient for $\beta_1 A_{\rho^1_t}+\beta_2 A_{\rho^2_t}$.
		Since $A_{\rho_t^1},A_{\rho_t^2},$ and $A_{\rho_t^3}$ are linearly independent (see the equation \eqref{eqn:A1}), so are the vector $B$ in Claim \ref{claim1} and $C$. 
		Since $B$ generates $H^1(Y;\g_\rho)$, we have $C- k B
		\in \mathrm{Im}\,\delta^0$ for some $k \in \Cbb$. It follows that there exists $a_1 A_{\rho^1_t} + a_2 A_{\rho^2_t}+a_3 A_{\rho^3_t}\in \mathrm{Im}\,\delta^0$ for some $a_1,a_2,a_3 \in \Cbb$ with $a_1 \neq a_2$. However, one easily checks that such a vector can not exist from the explicit expressions \eqref{eqn:A1} and \eqref{eqn:d0}.
	\end{proof}
	
	We fix a meridian $\mu$ by $g_1$ (see Figure \ref{fig:schubert}~(left)) and choose $P \neq 0 \in \g$ invariant under the adjoint action for all peripheral curves as 
	\[ P= \begin{pmatrix} \frac{1}{2}(m-m^{-1})  & 1
	\\ 0 & -\frac{1}{2}(m-m^{-1}) \end{pmatrix}.\]	
	
	\begin{claim} \label{claim3}
		The following choice of $\widetilde{\mathbf{h}}^1 \in C^1(Y;\g_\rho)$ represents the basis $\mathbf{h}^1$ of $H^1(Y;\g_\rho)$ satisfying $F_1(\mathbf{h}^1)=1$.
		\begin{equation*}
			\widetilde{\mathbf{h}}^1 =\dfrac{1}{4(1-m^{-2})} \left(A_{\rho^0_t} - \frac{ \partial \phi_w/\partial m}{ \partial \phi_w/\partial u} A_{\rho^3_t}\right)
		\end{equation*}
	\end{claim}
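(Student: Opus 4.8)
The plan is to verify two separate things: that $\widetilde{\mathbf{h}}^1$ represents a generator of $H^1(Y;\g_\rho)$, and that it is correctly normalized so that $F_1(\mathbf{h}^1)=1$. The first point requires no new work. The vector inside the parentheses is literally the generator $B$ of $H^1(Y;\g_\rho)$ constructed in Claim \ref{claim1}, so $\widetilde{\mathbf{h}}^1 = \tfrac{1}{4(1-m^{-2})}B$ is a nonzero scalar multiple of $B$ (the hypothesis $m\neq\pm1$ ensures $1-m^{-2}\neq0$), and hence still represents a generator. Thus the entire content of the claim is the normalization, and I expect the specific constant $\tfrac{1}{4(1-m^{-2})}$ to be engineered precisely to force $F_1=1$.

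To check the normalization I would use $F_1(\mathbf{h}^1)=\langle P, \widetilde{\mathbf{h}}^1(\widetilde{\mu})\rangle_\g$ with $\mu=g_1$, so the first task is to evaluate the cochain $\widetilde{\mathbf{h}}^1$ on $\widetilde{g}_1$. The decisive simplification is that $\rho^3_t$ varies only the parameter $t_3$, which does not occur in $\overline{\rho}(g_1)$; therefore $A_{\rho^3_t}(\widetilde{g}_1)=0$ and the second term of $\widetilde{\mathbf{h}}^1$ contributes nothing on the meridian. Only the $A_{\rho^0_t}$ part survives, and reading off its $g_1$-coordinates $(-1,m^{-1},0)$ from \eqref{eqn:A1} (or computing $\tfrac{d}{dt}\big|_{t=m}\rho^0_t(g_1)\rho(g_1)^{-1}$ directly) gives
\[
\widetilde{\mathbf{h}}^1(\widetilde{g}_1)=\frac{1}{4(1-m^{-2})}\begin{pmatrix} m^{-1} & -1 \\ 0 & -m^{-1}\end{pmatrix}.
\]

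It then remains to compute the Killing-form pairing against $P=\begin{pmatrix} \frac{1}{2}(m-m^{-1}) & 1 \\ 0 & -\frac{1}{2}(m-m^{-1})\end{pmatrix}$. Since both matrices have vanishing lower-left entry, the term $4(ac'+ca')$ in the formula for $\langle\cdot,\cdot\rangle_\g$ drops out, and the diagonal term yields $8\cdot m^{-1}\cdot\tfrac{1}{2}(m-m^{-1})=4(1-m^{-2})$; multiplying by the prefactor $\tfrac{1}{4(1-m^{-2})}$ produces exactly $1$. I do not anticipate any genuine obstacle here beyond the bookkeeping. The one point deserving care is the observation that $A_{\rho^3_t}$ contributes nothing on $\widetilde{g}_1$, since this is exactly what makes the normalizing constant depend only on $m$ and be independent of $\phi_w$ and its partial derivatives.
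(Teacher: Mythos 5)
Your proposal is correct and follows essentially the same route as the paper: the generator statement is inherited directly from Claim \ref{claim1}, and the normalization reduces to the single Killing-form computation $\langle P, B(\widetilde{g}_1)\rangle_\g = 4(1-m^{-2})$, which is exactly the computation the paper records. Your additional observation that $A_{\rho^3_t}(\widetilde{g}_1)=0$ (so the constant is independent of $\partial\phi_w/\partial u$) is implicit in the paper's evaluation of $B(\widetilde{g}_1)$ and is a correct and worthwhile remark.
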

	\begin{proof} The proof is immediately followed from Claim \ref{claim1} with  
		\[\langle P,B(\widetilde{g}_1) \rangle = 		\left\langle \begin{pmatrix} \frac{1}{2}(m-m^{-1}) & 1 \\ 0 & -\frac{1}{2}(m-m^{-1}) \end{pmatrix}, \begin{pmatrix} m^{-1} & -1 \\ 0 & - m^{-1} \end{pmatrix}  \right\rangle_\g = 4(1-m^{-2}).\]
	\end{proof}
	
	In the Schubert normal form, the relator $r = w g_1 w^{-1} g_2^{-1}$ is represented by a loop that travels along the knot diagram as in Figure \ref{fig:schubert}~(right).
	\begin{figure}[!h]
		\centering
\begingroup%
  \makeatletter%
  \providecommand\color[2][]{%
    \errmessage{(Inkscape) Color is used for the text in Inkscape, but the package 'color.sty' is not loaded}%
    \renewcommand\color[2][]{}%
  }%
  \providecommand\transparent[1]{%
    \errmessage{(Inkscape) Transparency is used (non-zero) for the text in Inkscape, but the package 'transparent.sty' is not loaded}%
    \renewcommand\transparent[1]{}%
  }%
  \providecommand\rotatebox[2]{#2}%
  \newcommand*\fsize{\dimexpr\f@size pt\relax}%
  \newcommand*\lineheight[1]{\fontsize{\fsize}{#1\fsize}\selectfont}%
  \ifx\svgwidth\undefined%
    \setlength{\unitlength}{351.87576174bp}%
    \ifx\svgscale\undefined%
      \relax%
    \else%
      \setlength{\unitlength}{\unitlength * \real{\svgscale}}%
    \fi%
  \else%
    \setlength{\unitlength}{\svgwidth}%
  \fi%
  \global\let\svgwidth\undefined%
  \global\let\svgscale\undefined%
  \makeatother%
  \begin{picture}(1,0.44568475)%
    \lineheight{1}%
    \setlength\tabcolsep{0pt}%
    \put(0,0){\includegraphics[width=\unitlength,page=1]{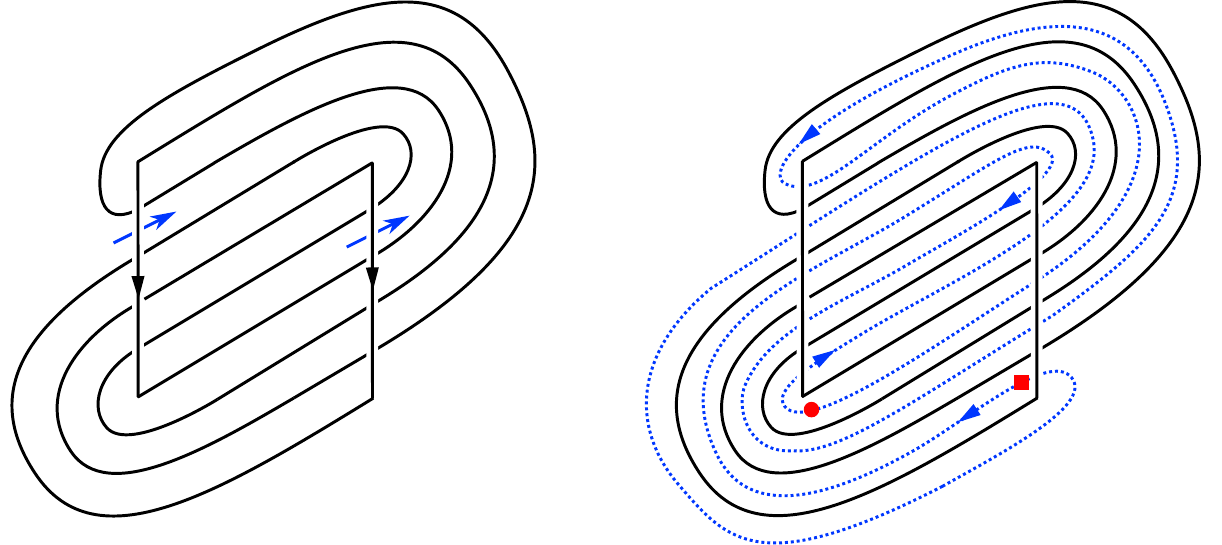}}%
    \put(0.06615889,0.23634118){\color[rgb]{0,0,0}\makebox(0,0)[lt]{\lineheight{1.25}\smash{\begin{tabular}[t]{l}$g_1$\end{tabular}}}}%
    \put(0.25946879,0.23158428){\color[rgb]{0,0,0}\makebox(0,0)[lt]{\lineheight{1.25}\smash{\begin{tabular}[t]{l}$g_2$\end{tabular}}}}%
  \end{picture}%
\endgroup%

		\caption{The generators $g_1,g_2$ and the loop representing the relator $r$ for $(p,q)=(5,3)$.}
		\label{fig:schubert}
	\end{figure}
	More precisely, if we follow the loop from the square-dot as in Figure \ref{fig:schubert}~(right), we obtain the word $w g_1 w^{-1} g_2^{-1}$. Similarly, if we start from the circle-dot as in Figure \ref{fig:schubert}~(right), we obtain the word $g_1 w^\dagger g_2^{-1} (w^\dagger)^{-1}$ along the loop, where $w^\dagger=g_2^{\epsilon_{1}} g_1^{\epsilon_2}\cdots g_1^{\epsilon_{p-1}}$ is the word obtained from $w$ by exchanging $g_1$ and $g_2$.
	Considering the base-point change from the square-dot to the circle-dot, we obtain
	\begin{equation} \label{eqn:v}
	v'^{-1} (w g_1 w^{-1} g_2^{-1})  v' = g_1 w^\dagger g_2^{-1} (w^\dagger)^{-1} \quad \textrm{where }
	v'= 
	\left \{	
	\begin{array}{ll}
	g_1^{\epsilon_1}g_2^{\epsilon_2}\cdots g_1^{\epsilon_{k-2}} g_2^{\epsilon_{k-1}} & \mathrm{if} \ \epsilon_k=1\\
	g_2g_1^{\epsilon_1}g_2^{\epsilon_2}\cdots g_2^{\epsilon_{k-1}}g_1^{\epsilon_k} & \mathrm{if} \ \epsilon_k=-1
	\end{array}
	\right.
	\end{equation}
	Here $0 < k < p$ is a unique odd integer satisfying $k q \equiv \pm 1$ in modulo $2p$.
	Since the loop $w^\dagger w$ is a longitude of $K$  (see \cite{riley1972parabolic, riley1984nonabelian}), the boundary torus $\partial M$ of $K$ is represented by $- v'^{-1} \cdot \widetilde{r} + w^\dagger \cdot \widetilde{r} \in C_2(\widetilde{Y};\Zbb)$ as in Figure \ref{fig:torus}.
	\begin{figure}[!h]
		\centering
		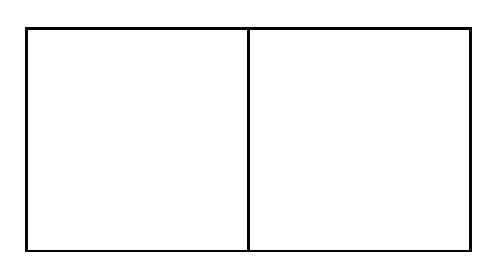
		\caption{The boundary torus $\partial M$.}
		\label{fig:torus}
	\end{figure}
	
	\begin{claim}\label{claim4}
		The following choice of $\widetilde{\mathbf{h}}^2 \in C^2(Y;\g_\rho)$ represents the basis $\mathbf{h}^2$ of $H^2(Y;\g_\rho)$ satisfying $F_2(\mathbf{h}^2)=1$.
		\begin{equation}\label{eqn:h2}
		\widetilde{\mathbf{h}}^2 = \begin{pmatrix} 0 & 0 & 
		-\dfrac{1}{4 v'_{11} \phi_{v'}}
		\end{pmatrix}^T
		\end{equation}
		Here we write an element of $\g$ as an element of $\Cbb^3$ with respect to the basis \eqref{eqn:basis}.
	\end{claim}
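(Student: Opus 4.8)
The plan is to evaluate $F_2(\widetilde{\mathbf{h}}^2)=\langle P,\widetilde{\mathbf{h}}^2(\widetilde{\partial M})\rangle_\g$ directly, using that $\partial M$ is represented by $-v'^{-1}\cdot\widetilde{r}+w^\dagger\cdot\widetilde{r}$ (Figure \ref{fig:torus}) and that a cochain in $C^2(Y;\g_\rho)$ is $\Zbb[\pi_1 M]$-equivariant. Since $\widetilde{\mathbf{h}}^2(\widetilde{r})=-\frac{1}{4v'_{11}\phi_{v'}}e_3$, equivariance gives $\widetilde{\mathbf{h}}^2(g\cdot\widetilde{r})=\mathrm{Ad}_{\rho(g)}(\widetilde{\mathbf{h}}^2(\widetilde{r}))$, so that
\[ F_2(\widetilde{\mathbf{h}}^2)=-\frac{1}{4v'_{11}\phi_{v'}}\Big(-\big\langle P,\mathrm{Ad}_{\rho(v'^{-1})}(e_3)\big\rangle_\g+\big\langle P,\mathrm{Ad}_{\rho(w^\dagger)}(e_3)\big\rangle_\g\Big). \]
Everything then reduces to the two Killing-form terms, and I would treat each by moving the adjoint action onto $P$ via the invariance $\langle P,\mathrm{Ad}_{\rho(g)}(e_3)\rangle_\g=\langle\mathrm{Ad}_{\rho(g^{-1})}(P),e_3\rangle_\g$.

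The first step is a one-line matrix computation: $\langle X,e_3\rangle_\g$ equals $4$ times the $(1,2)$-entry of $X$, and the $(1,2)$-entry of $\mathrm{Ad}_{\rho(g)}(P)=\rho(g)P\rho(g)^{-1}$ equals $g_{11}\big(g_{11}-(m-m^{-1})g_{12}\big)=g_{11}\phi_g$. Applying this with $g=v'$ gives $\langle P,\mathrm{Ad}_{\rho(v'^{-1})}(e_3)\rangle_\g=4v'_{11}\phi_{v'}$, which is precisely the denominator built into $\widetilde{\mathbf{h}}^2$; the same formula shows that $\langle P,\mathrm{Ad}_{\rho(w^\dagger)}(e_3)\rangle_\g$ equals $4$ times the $(1,2)$-entry of $\mathrm{Ad}_{\rho((w^\dagger)^{-1})}(P)$.

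The crux is to show that this last $(1,2)$-entry vanishes. Here I would exploit that $w^\dagger w$ is a longitude, so the peripheral invariance of $P$ yields $\mathrm{Ad}_{\rho(w^\dagger)}\mathrm{Ad}_{\rho(w)}(P)=P$, whence $\mathrm{Ad}_{\rho((w^\dagger)^{-1})}(P)=\mathrm{Ad}_{\rho(w)}(P)$. Since $P$ commutes with $\rho(g_1)=\rho(\mu)$ (as $\mu$ is peripheral) and the relator gives $\rho(w)\rho(g_1)\rho(w)^{-1}=\rho(g_2)$, the element $\mathrm{Ad}_{\rho(w)}(P)$ commutes with $\rho(g_2)$; a direct computation using $u\neq0$ shows that any $X\in\g$ commuting with the lower-triangular $\rho(g_2)$ has vanishing $(1,2)$-entry. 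Hence the $w^\dagger$-term is $0$, and substituting back gives $F_2(\widetilde{\mathbf{h}}^2)=-\frac{1}{4v'_{11}\phi_{v'}}\cdot(-4v'_{11}\phi_{v'})=1$, as claimed.

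The main obstacle is exactly the vanishing of the $w^\dagger$-contribution: each of the two terms is an individually nonzero function on $X_K$, and it is only the peripheral invariance of $P$ transported through the longitude $w^\dagger w$, together with the explicit lower-triangular shape of $\rho(g_2)$, that makes the $w^\dagger$-term drop out and leaves precisely the normalizing factor $4v'_{11}\phi_{v'}$. I would also record in passing that $v'_{11}\phi_{v'}\neq0$, so that $\widetilde{\mathbf{h}}^2$ is well-defined; this follows from the determinant argument already used to establish $w_{11}\neq0$ and $w_{12}\neq0$ before the claim.
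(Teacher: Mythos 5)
Your main computation is correct and, in the $v'$-part, is the same as the paper's: both proofs reduce $F_2(\widetilde{\mathbf{h}}^2)$ to the two pairings $\langle P,\mathrm{Ad}_{\rho(v'^{-1})}(e_3)\rangle_\g$ and $\langle P,\mathrm{Ad}_{\rho(w^\dagger)}(e_3)\rangle_\g$, and your identity $\bigl(\rho(g)P\rho(g)^{-1}\bigr)_{12}=g_{11}\phi_g$ is exactly how the normalizing factor $4v'_{11}\phi_{v'}$ arises. Where you genuinely diverge is the vanishing of the $w^\dagger$-term: the paper computes $\langle P,\mathrm{Ad}_{\rho(w^\dagger)}(e_3)\rangle_\g=4w_{22}^\dagger\bigl(w_{22}^\dagger+(m-m^{-1})w_{12}^\dagger\bigr)$ and kills it by invoking Riley's identity $w_{22}^\dagger+(m-m^{-1})w_{12}^\dagger=\phi_w$, which vanishes on the representation variety, whereas you transport the peripheral invariance of $P$ along the longitude $w^\dagger w$ to rewrite $\mathrm{Ad}_{\rho((w^\dagger)^{-1})}(P)=\mathrm{Ad}_{\rho(w)}(P)$, use the relator to see this commutes with $\rho(g_2)$, and read off that its $(1,2)$-entry is zero. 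Your route is self-contained (it does not import the external identity for $w^\dagger$) and is a legitimate alternative.

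There is, however, one genuine gap: your justification that $v'_{11}\phi_{v'}\neq 0$. The determinant argument only shows that $v'_{11}$ and $\phi_{v'}=v'_{11}-(m-m^{-1})v'_{12}$ cannot \emph{both} vanish (else $v'_{11}=v'_{12}=0$, contradicting $\det\rho(v')=1$); it does not rule out one factor vanishing while the other does not. For $w$ the argument works only because the relation $\phi_w=0$ ties $w_{11}$ and $w_{12}$ together, and there is no analogous relation for $v'$. This is precisely what the step you omitted is for: the paper first shows, using the $e_1$- and $e_2$-coordinates of $(\delta^1 A_{\rho^0_t})(\widetilde{r})$ and $(\delta^1 A_{\rho^1_t})(\widetilde{r})$ from Lemma \ref{lem:adelta} (together with $\frac{\partial\phi_w}{\partial m}\neq 0$), that every class in $H^2(Y;\g_\rho)$ has a representative of the form $(0,0,c)^T$. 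Combined with the fact that $F_2$ is an isomorphism ($\mu$-regularity) and your computation $F_2\bigl([(0,0,c)^T]\bigr)=-4c\,v'_{11}\phi_{v'}$, surjectivity of $F_2$ forces $v'_{11}\phi_{v'}\neq0$, and only then is $\widetilde{\mathbf{h}}^2$ well-defined. You should restore this reduction step (or supply some other proof of the non-vanishing) rather than appeal to the determinant argument.
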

	\begin{proof} Recall Lemma \ref{lem:adelta} that $\alpha_1+\alpha_2 =\partial \phi_w/\partial m \neq 0$ and thus both $\alpha_1$ and $\alpha_2$ can not be zero.
	Without loss of generality, we assume $\alpha_1 \neq 0$. Considering the $e_1$- and $e_2$-coordinates of $(\delta^1 A_{\rho_t^0})(\widetilde{r})$ and $(\delta^1 A_{\rho_t^1})(\widetilde{r})$ given in Lemma \ref{lem:adelta}, it follows that any element $\eta\in H^2(Y;\g_\rho)$ has a representative  $\widetilde{\eta}\in C^2(Y;\g_\rho)$ such that $\widetilde{\eta}(\widetilde{r})=(0,0,c)^T$ for some $ c\in \Cbb$.
	On the other hand, 	one can easily check that  any vector $C \in \g$ satisfying $\langle P, C\rangle_{\g} =1$ is of the form 
	\begin{equation} \label{eqn:form}
	C=(0,0,1/4)^T +
	a(1,0,0)^T+
	b(0,\, -1,\, m-m^{-1})^T
	\end{equation}
	for some $a,b \in \Cbb$. 
	
	Recall that we showed (cf. Figure \ref{fig:torus}) that  an element $\widetilde{\eta} \in C^2(Y;\g_\rho)$ with $\widetilde{\eta}(\widetilde{r})=(0,0,c)^T$ satisfies 
	\begin{equation*}
		\widetilde{\eta}(\widetilde{\partial M})=\left(-\mathrm{Ad}_{\rho(v'^{-1})} +\mathrm{Ad}_{\rho(w^\dagger)}\right)(0,0,c)^T.
	\end{equation*}
	A simple computation shows that $	\widetilde{\eta}(\widetilde{\partial M})$ in the above equation has the form \eqref{eqn:form} if and only if
		\[c=\dfrac{1}{4 (w_{22}^\dagger(w_{22}^\dagger+(m-m^{-1}) w_{12}^\dagger) - v'_{11}(v'_{11}-(m-m^{-1})v'_{12}))}. \]
		This completes the proof, since $w_{22}^\dagger+(m-m^{-1}) w_{12}^\dagger$ coincides with the Riley polynomial $\phi_w$,  see \cite{riley1984nonabelian}.
	\end{proof}
	
	We now compute the determinant of the transition matrices.
	From Claim \ref{claim2} and \ref{claim3}, we have  $[b^0/\mathbf{c}^0]=1$ and
	\begin{align*}
		\left [	\dfrac{(\delta^0(b^0), \widetilde{\mathbf{h}}^1, b^1)}{\mathbf{c}^1} 
		\right ] &=
		\frac{1}{4(1-m^{-2})}
		\left [
		\dfrac{(\delta^0(\mathbf{c}^0),\, A_{\rho^0_t},\, \beta_1 A_{\rho^1_t} + \beta_2 A_{\rho^2_t},\, A_{\rho_t^3})}{\mathbf{c}^1} 
		\right ]\\
		&= 			\frac{1}{4(1-m^{-2})}\,
		\det 
		\begin{pmatrix}
			m^2-1 & -2m & -1 & -1 & -\beta_1&0\\
			0 & 0& m^{-1} & m^{-1} & \beta_1 m^{-1}&0\\
			0 & 0 & m^{-2}-1 & 0 & 0&0\\
			m^2-1 & 0 & 0 & 0 &0 & 0\\
			um & 0 & 0 & m^{-1} & \beta_2 m^{-1}& 0\\
			-u^2 & -2u m^{-1} & m^{-2}-1 & -u m^{-2} & -\beta_2 u m^{-2} & -m^{-1}
		\end{pmatrix} \\
		&=\frac{(1-m^{-2})(\beta_1-\beta_2)}{2}.
	\end{align*}
	We choose the constants  $\beta_1 \neq \beta_2$ such that $\alpha_1 \beta_1 + \alpha_2 \beta_2 =0$. Note that the fact $\alpha_1 + \alpha_2 = \frac{\partial \phi_w}{\partial m}\neq 0$ implies that $\beta_1 \neq \beta_2$. 
	Then Lemma \ref{lem:adelta} gives that
	\[(\delta^1(\beta_1 A_{\rho_t^1} +\beta_2 A_{\rho_t^2}))(\widetilde{r}) = \begin{pmatrix} \frac{\beta_1-\beta_2}{m} & 0 \\ \ast & -\frac{\beta_1-\beta_2}{m} \end{pmatrix}\]
	and from Claims \ref{claim2} and \ref{claim4} we obtain
	\begin{align*}
		\left[ \frac{\delta^1(b^1),\widetilde{\mathbf{h}}^2}{\mathbf{c}^2} \right]&=\left[ \frac{\delta^1(\beta_1 A_{\rho_t^1} +\beta_2 A_{\rho_t^2}),\,\delta^1(A_{\rho_t^3}),\,\widetilde{\mathbf{h}}^2}{\mathbf{c}^2} \right]\\
		&= \det
		\begin{pmatrix}  0 &  \frac{\partial \phi_w}{\partial u} w_{11} m &0\\
			\frac{\beta_1 -\beta_2}{m}  & \ast & 0 \\
			\ast & \ast & - \frac{1}{4v'_{11} \phi_{v'}} \end{pmatrix} \\
		&=\frac{ (\beta_1-\beta_2)w_{11} }{4v'_{11} \phi_{v'}}\frac{\partial \phi_w}{\partial u},
	\end{align*}
	where the second column is obtained from the equation \eqref{eqn:r3}. Since one easily checks that $v'_{11} \phi_{v'} = v_{11}\phi_v$ if $\epsilon_k=1$ and $v'_{11} \phi_{v'}=m^2 v_{11} \phi_v$ if $\epsilon_k=-1$ from the equation \eqref{eqn:v}, we obtain Theorem \ref{thm:main}
	\begin{equation}\label{eqn:fina} 	\Tbb_\mu(\chi_\rho) = \pm \left[\frac{b^0}{\mathbf{c}^0}\right] \left [	\dfrac{\delta^0(b^0), \widetilde{\mathbf{h}}^1, b^1}{\mathbf{c}^1} 
	\right ]^{-1}  		\left[ \frac{\delta^1(b^1),\widetilde{\mathbf{h}}^2}{\mathbf{c}^2} \right]=\pm \frac{ w_{11}}{2(1-m^{-2})v'_{11} \phi_{v'}} \frac{\partial \phi_w}{\partial u}=\pm\frac{m^{\epsilon_k+1} w_{11}}{2(m^2-1)v_{11} \phi_{v}} \frac{\partial \phi_w}{\partial u}.
	\end{equation}
	Since changing the character $\chi_\rho$, equivalently, changing the pair $(m,u)$, does not change any combinatorial data of the computation, the sign $\pm$ in the equation \eqref{eqn:fina} does not depend on the choice of $\chi_\rho$ (see \cite{turaev2001introduction} for details).
	\subsection{A proof of Theorem \ref{thm:main2}} \label{sec:proof2}

	Recall that the character variety $X_K$ of a two-bridge knot $K$ is given by 
	\[X_K = \{ (m,u) \in \Cbb^2 : \phi_w(m,u)=0,\ m\neq0,\ u \neq 0\}/_{\sim}\]
	where the quotient $\sim$ means that we identify $(m,u)$ and $(m^{-1},u)$. Since the trace function of $\mu$ is simply $m+m^{-1}$,   Theorem \ref{thm:main} implies that  for generic $c \in \Cbb$
	\begin{equation}\label{eqn:zero}
	\sum_{\chi_\rho \in \mathrm{tr}_\mu^{-1}(c)} \frac{1}{\mathbb{T}_\mu(\chi_\rho)} = \pm  \frac{d^{\epsilon_k+1}}{2(d^2-1)} \sum_{\phi_w(d,u)=0} \frac{v_{11}(d,u) \phi_v(d,u)}{w_{11}(d,u) \frac{\partial \phi_w}{\partial u}(d,u)}
	\end{equation}
	where $d \in \Cbb^\ast$ satisfies $d+d^{-1}=c$.
	Hereafter we regard every element in $\Cbb[m^{\pm1},u]$ as an element of $\Cbb[u]$ by letting $m=d$. For instance, $\phi_w = \phi_w(d,u) \in \Cbb[u]$; in particular, $\phi'_w$ means the derivative of $\phi_w$ with respect to the variable $u$.		
	
	Recall that we have $w=g_1^{\epsilon_1}g_2^{\epsilon_2} \cdots g_1^{\epsilon_{p-2}}g_2^{\epsilon_{p-1}}$ and $v=g_1^{\epsilon_1} g_2^{\epsilon_2} \cdots g_1^{\epsilon_{k-2}}g_2^{\epsilon_{k-1}}$ where $0 < k < p$ is a unique odd integer satisfying $k q \equiv \pm 1$ in modulo $2p$. 
	Let $y = g_1^{\epsilon_k}g_2^{\epsilon_k+1}\cdots g_1^{\epsilon_{p-2}}g_2^{\epsilon_{p-1}}$ so that $w=vy$. From $v= wy^{-1}$ we have $v_{11} = w_{11} y_{22}- w_{12}y_{21}$ and thus $v_{11}=-w_{11}(y_{21}-(d-d^{-1})y_{22})/(d-d^{-1})$ on the zero set of the Riley polynomial $\phi_w= w_{11}-(d-d^{-1})w_{12}$. Therefore,
	\begin{equation} \label{eqn:first}
		\sum_{\phi_w=0} \frac{v_{11} \phi_v}{w_{11} \phi'_w}=-\frac{1}{d-d^{-1}} \sum_{\phi_w=0} \frac{(y_{21}-(d-d^{-1})y_{22})\phi_v}{ \phi_w'}.
	\end{equation}
	We will claim that 
	$(y_{21}-(d-d^{-1})y_{22}) \phi_v= \alpha \phi_w + h$ for some $\alpha \in \Cbb$ and $h \in \Cbb[u]$ with $\deg h \leq \deg \phi_w-2$. Then we obtain Theorem \ref{thm:main2} by applying the Euler-Jacobi theorem, saying that for any $f$ and $g \in \Cbb[u]$ 
	\[\sum_{f=0} \frac{g(u)}{f'(u)}=0\]
	whenever  $f$ has a non-zero constant term with no double root and $g $ satisfies $\deg g \leq \deg f -2$. We refer to \cite[Chapter 5]{griffiths1978principles} for details.
	Note that the Riley polynomial $\phi_w$ has a non-zero constant term with no double root for generic $d\in\Cbb^\ast$, see \cite[Lemma 3]{riley1984nonabelian}.

	
	For $f$ and $g \in \Cbb[u]$ we write $f  = g + o(u^n)$ if $f-g$ has degree less than $n\geq0$; $f=g+o(u^0)$ means $f=g$.
	
	\begin{lemma} \label{lem:key} Let  $(f_1,\cdots,f_{2n})$ be a sequence of $f_j \in \{\pm1\}$ of even length $2n>0$ and let $h_j := g_1^{f_1} g_2^{f_2}\cdots g_1^{f_{2j-1}} g_2^{f_{2j}}$ for $1 \leq j \leq n$. Then $h_j$ satisfies
		\begin{equation} \label{eqn:induc}
		\rho(h_j)=\pm\begin{pmatrix}
		u^{j} + A_j u^{j-1}+o(u^{j-1})	&  \ -f_{2j} d^{-f_{2j}} \left( u ^{j-1} +o(u^{j-1})\right)\\[3pt]
		f_1 d ^{-f_1} \left(u^j + B_j u^{j-1} + o(u^{j-1})\right) & \ -f_1 f_{2j} d^{-f_1-f_{2j}} \left(u^{j-1}  + o(u^{j-1})\right)
		\end{pmatrix}
		\end{equation}
		for some $A_j$ and $B_j \in \Cbb$ where the value $A_j - B_j$ does not depend on $j$.
	\end{lemma}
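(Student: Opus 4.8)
The plan is to prove the statement by induction on $j$, computing $\rho(h_{j+1})$ from $\rho(h_j)$ by a single right-multiplication and tracking only the top two $u$-degrees in each entry. With $m=d$ the explicit matrices are
\[
\rho(g_1^{s})=\begin{pmatrix} d^{s} & s \\ 0 & d^{-s}\end{pmatrix},\qquad
\rho(g_2^{t})=\begin{pmatrix} d^{t} & 0 \\ -t\,u & d^{-t}\end{pmatrix}\qquad(s,t\in\{\pm1\}),
\]
so that $\rho(g_1^{s}g_2^{t})=\left(\begin{smallmatrix} d^{s+t}-stu & s\,d^{-t} \\ -t\,u\,d^{-s} & d^{-s-t}\end{smallmatrix}\right)$, whose entries have $u$-degree at most $1$. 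For the base case $j=1$ one reads $\rho(h_1)=\rho(g_1^{f_1}g_2^{f_2})$ off this formula; factoring out the leading coefficient $-f_1f_2$ of its $(1,1)$-entry puts it in the claimed shape with $A_1=-f_1f_2\,d^{f_1+f_2}$, $B_1=0$, and global sign $-f_1f_2$.

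For the inductive step I would write $\rho(h_{j+1})=\rho(h_j)\,\rho(g_1^{s}g_2^{t})$ with $s=f_{2j+1},\,t=f_{2j+2}$, substitute the inductive form of the four entries of $\rho(h_j)$ (of $u$-degrees $j,j-1,j,j-1$), and expand. Because the right factor has degree $\le 1$, each product entry is a polynomial whose two leading coefficients can be extracted directly; the new $(1,1)$-entry has leading coefficient $-st$, so I absorb this unit into the global sign, multiplying the accumulated $\pm$ by $-st$. After renormalization one checks entry by entry that the four entries acquire exactly the asserted leading coefficients $1$, $-t\,d^{-t}$, $f_1d^{-f_1}$, and $-f_1t\,d^{-f_1-t}$ (here $f_1$ is never altered, which is why it persists in the bottom row, while the trailing exponent updates from $f_{2j}$ to $f_{2j+2}=t$).

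The heart of the matter is a structural observation that I would build into the induction hypothesis: the bottom row of $\rho(h_j)$ equals $f_1d^{-f_1}$ times a row of exactly the same shape as the top row, with $B_j$ in place of $A_j$ and with the same second entry. Since right-multiplication by $\rho(g_1^sg_2^t)$ acts identically and independently on the two rows, this proportionality is preserved, and the subleading ($u^{j}$) coefficient is updated by the \emph{same} affine rule of slope $1$ in both rows. Concretely this yields
\[
A_{j+1}=A_j-st\,d^{s+t}-s\,f_{2j}\,d^{-s-f_{2j}},\qquad
B_{j+1}=B_j-st\,d^{s+t}-s\,f_{2j}\,d^{-s-f_{2j}},
\]
so that $A_{j+1}-B_{j+1}=A_j-B_j$, and hence $A_j-B_j$ is independent of $j$, as claimed.

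I expect the main obstacle to be purely the bookkeeping: one must keep the leading and subleading coefficients of all four entries straight simultaneously while the global sign accumulates a factor $-st$ at each step. The decisive and genuinely structural point is that the two rows evolve by the identical update, so that no separate computation of $B_j$ is really needed once $A_j$ is understood; verifying that this proportionality of rows survives the sign renormalization is the step I would treat most carefully.
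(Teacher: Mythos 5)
Your proof is correct and follows essentially the same route as the paper: both arguments reduce to the explicit formula for $\rho(g_1^{s}g_2^{t})$ (up to the unit $-st$ absorbed into the global sign) and then induct on $j$ by right-multiplication, obtaining the identical update $A_{j+1}=A_j-f_{2j+1}f_{2j+2}d^{f_{2j+1}+f_{2j+2}}-f_{2j}f_{2j+1}d^{-f_{2j}-f_{2j+1}}$ for both $A_j$ and $B_j$, whence $A_j-B_j$ is constant. Your observation that the two rows share the same leading data and therefore evolve by the same affine rule is exactly the reason the paper's two recursions coincide, so the arguments match.
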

	\begin{proof} 	Recall that we have		\begin{equation*} 
			\rho(g_1) = \begin{pmatrix} d & 1 \\ 0 & d^{-1} \end{pmatrix}, \ \rho(g_2) = \begin{pmatrix} d & 0 \\ -u & d^{-1} \end{pmatrix}.
		\end{equation*}
		A simple computation shows that 
		\[\rho\left(g_1^{f_i} g_2^{f_j}\right)=\pm 
		\begin{pmatrix}
		u - f_i f_j d^{f_i+f_j} & -f_j d^{-f_j} \\[3pt]
		f_i d^{-f_i} u & - f_i f_j d^{-f_i-f_j}
		\end{pmatrix}
		\] for all $f_i, f_j \in \{\pm1\}$. With the above equation, a routine induction proves that $\rho(h_j)$ has the form \eqref{eqn:induc} with relations
		\begin{align*}
			A_{j+1}&=A_j-f_{2j}f_{2j+1} d^{-f_{2j}-f_{2j+1}} -f_{2j+1}f_{2j+2} d^{f_{2j+1}+f_{2j+2}}, \\
			B_{j+1}&=B_j-f_{2j}f_{2j+1} d^{-f_{2j}-f_{2j+1}} -f_{2j+1}f_{2j+2} d^{f_{2j+1}+f_{2j+2}}. 
		\end{align*}
	In particular, we have $A_{j+1}-B_{j+1}=A_j - B_j$.
	\end{proof}

	From Lemma \ref{lem:key} we have
	\begin{align}
		\rho(v)& =\pm\begin{pmatrix}
			u^{\frac{k-1}{2}} + V_{11} u^{\frac{k-3}{2}}+o(u^{\frac{k-3}{2}})	&  \ -\epsilon_{k-1} d^{-\epsilon_{k-1}} \left(u ^{\frac{k-3}{2}} + o(u^{\frac{k-3}{2}})\right)\\[3pt]
			\epsilon_1 d ^{-\epsilon_1} \left(u^\frac{k-1}{2} + V_{21} u^{\frac{k-3}{2}} +o(u^{\frac{k-3}{2}})\right) & \ -\epsilon_1 \epsilon_{k-1} d^{-\epsilon_1-\epsilon_{k-1}} \left (u^{\frac{k-3}{2}} +  o(u^{\frac{k-3}{2}})\right) 
		\end{pmatrix} \label{eqn:vm}\\
		\rho(y)& =\pm\begin{pmatrix}
			u^{\frac{p-k}{2}} + Y_{11} u^{\frac{p-k-2}{2}}+ o(u^{\frac{p-k-2}{2}})	&  \ -\epsilon_{p-1} d^{-\epsilon_{p-1}} \left( u ^{\frac{p-k-2}{2}} + o(u^{\frac{p-k-2}{2}})\right)\\[3pt]
			\epsilon_k d^{-\epsilon_k} \left(u^\frac{p-k}{2} + Y_{21} u^{\frac{p-k-2}{2}} +o(u^{\frac{p-k-2}{2}})\right) & \ -\epsilon_k \epsilon_{p-1} d^{-\epsilon_k-\epsilon_{p-1}} \left(u^{\frac{p-k-2}{2}} +  o(u^{\frac{p-k-2}{2}})\right)
		\end{pmatrix} \label{eqn:ym}\\
		\rho(w)& =\pm\begin{pmatrix}
			u^{\frac{p-1}{2}} + W_{11} u^{\frac{p-3}{2}}+o(u^{\frac{p-3}{2}})	&  \ -\epsilon_{p-1} d^{-\epsilon_{p-1}} \left( u ^{\frac{p-3}{2}} + o(u^{\frac{p-3}{2}})\right)\\[3pt]
			\epsilon_1 d^{-\epsilon_1} \left(u^\frac{p-1}{2} + W_{21} u^{\frac{p-3}{2}} +o(u^{\frac{p-3}{2}})\right) & \ -\epsilon_1 \epsilon_{p-1} d^{-\epsilon_1-\epsilon_{p-1}} \left(u^{\frac{p-3}{2}} +  o(u^{\frac{p-3}{2}})\right) 
		\end{pmatrix} \label{eqn:wm}
	\end{align}
	for some $V_{ij},Y_{ij},W_{ij}\in\Cbb$. Note that $k\neq 1$ (and thus $k\geq3$) since $q \neq \pm1$ and that 
			\begin{equation}\label{eqn:phiw} \phi_w = \pm \left(u^\frac{p-1}{2} + (W_{11} +\epsilon_{p-1} d^{-\epsilon_{p-1}} (d-d^{-1})) u^\frac{p-3}{2} + o(u^\frac{p-3}{2})\right).
			\end{equation}
		Computing the first and second leading terms of $(y_{21}-(d-d^{-1})y_{22})\phi_v$,
	\begin{align*}
		 & (y_{21}-(d-d^{-1})y_{22})\phi_v\\
		&=	 \pm\epsilon_k d ^{-\epsilon_k} \left(u^\frac{p-k}{2} + (Y_{21}+\epsilon_{p-1}d^{-\epsilon_{p-1}}(d-d^{-1})) u^{\frac{p-k-2}{2}} + o(u^{\frac{p-k-2}{2}})\right)\\
		&\quad \cdot \left(u^{\frac{k-1}{2}} + (V_{11}+\epsilon_{k-1}d^{-\epsilon_{k-1}}(d-d^{-1})) u^{\frac{k-3}{2}}+o(u^\frac{k-3}{2})\right) \\
		&= \pm\epsilon_k d^{-\epsilon_k}  \left(u^{\frac{p-1}{2}}+(V_{11}+Y_{21} +\epsilon_{k-1}d^{-\epsilon_{k-1}}(d-d^{-1})+\epsilon_{p-1}d^{-\epsilon_{p-1}}(d-d^{-1}))u^\frac{p-3}{2} + o(u^\frac{p-3}{2})\right).
	\end{align*}
	\begin{claim}\label{claim5} 
		$W_{11}=V_{11}+Y_{21} +\epsilon_{k-1}d^{-\epsilon_{k-1}}(d-d^{-1})$.
	\end{claim}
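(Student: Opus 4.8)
The plan is to prove Claim \ref{claim5} by combining the factorization $w = vy$ with Lemma \ref{lem:key} and the arithmetic of the exponents $\epsilon_i$ near the index $k$. First I would compare the $(1,1)$-entries of $\rho(w) = \rho(v)\rho(y)$, namely $w_{11} = v_{11}y_{11} + v_{12}y_{21}$, and read off the coefficient of $u^{(p-3)/2}$ from the expansions \eqref{eqn:vm}, \eqref{eqn:ym}, \eqref{eqn:wm}. Matching the leading $u^{(p-1)/2}$-terms forces the three overall signs to satisfy $s_w = s_v s_y$, so the signs cancel in the coefficient comparison; the product $v_{11}y_{11}$ contributes $V_{11} + Y_{11}$, while $v_{12}y_{21}$ has leading term of degree $(k-3)/2 + (p-k)/2 = (p-3)/2$ with coefficient $-\epsilon_{k-1}\epsilon_k d^{-\epsilon_{k-1}-\epsilon_k}$. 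Hence
\[ W_{11} = V_{11} + Y_{11} - \epsilon_{k-1}\epsilon_k\, d^{-\epsilon_{k-1}-\epsilon_k}. \]

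Next I would eliminate $Y_{11}$ in favour of $Y_{21}$. Since $y = g_1^{\epsilon_k}g_2^{\epsilon_{k+1}}\cdots$ has first two exponents $\epsilon_k, \epsilon_{k+1}$, Lemma \ref{lem:key}---whose base case gives $A_1 - B_1 = -f_1 f_2 d^{f_1+f_2}$ and whose recursion preserves $A_j - B_j$---yields $Y_{11} - Y_{21} = -\epsilon_k\epsilon_{k+1}d^{\epsilon_k+\epsilon_{k+1}}$. Substituting this into the previous display reduces the claim to the purely algebraic identity
\[ -\epsilon_k\epsilon_{k+1}\,d^{\epsilon_k+\epsilon_{k+1}} = \epsilon_{k-1}\epsilon_k\, d^{-\epsilon_{k-1}-\epsilon_k} + \epsilon_{k-1}\,d^{-\epsilon_{k-1}}(d - d^{-1}). \]

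The essential input is the behaviour of $\epsilon_{k-1}$ and $\epsilon_{k+1}$, which is where the defining congruence $kq \equiv \pm 1 \pmod{2p}$ enters. Writing $kq = 2pt + \eta$ with $\eta = \pm 1$, one gets $\epsilon_k = \eta$, and since $(k\pm1)q/p = 2t + (\eta \pm q)/p$ with $\eta \pm q \ne 0$ and $|\eta \pm q| \le 1 + |q| < p$ (using that $q$ is odd, $q \ne \pm 1$, $|q| < p$), the sign of $\eta \pm q$ equals that of $\pm q$. Therefore $\lfloor (\eta + q)/p\rfloor$ and $\lfloor(\eta - q)/p\rfloor$ are $0$ or $-1$ according to the sign of $q$, which gives $\epsilon_{k+1} = \mathrm{sign}(q)$ and $\epsilon_{k-1} = -\mathrm{sign}(q)$; in particular $\epsilon_{k-1} = -\epsilon_{k+1}$. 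Setting $s := \epsilon_{k+1} = -\epsilon_{k-1}$ and $a := \epsilon_k$, both sides of the displayed identity factor through $-s\,d^{s}$, and after cancelling it the identity collapses to $a(d^{a} - d^{-a}) = d - d^{-1}$, which holds trivially for $a = \pm 1$.

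The main obstacle is the middle reduction: one must set up the expansions and the sign bookkeeping carefully so that the claim becomes exactly this one-line identity, and in particular isolate the single arithmetic fact $\epsilon_{k-1} = -\epsilon_{k+1}$ coming from the congruence for $k$, rather than carrying four separate sign cases through the whole computation. Once $\epsilon_{k-1} = -\epsilon_{k+1}$ is established, the remaining verification is immediate.
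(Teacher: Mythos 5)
Your proof is correct and follows essentially the same route as the paper: compare the $u^{(p-3)/2}$-coefficients in $w_{11}=v_{11}y_{11}+v_{12}y_{21}$, use Lemma \ref{lem:key} to get $Y_{11}-Y_{21}=-\epsilon_k\epsilon_{k+1}d^{\epsilon_k+\epsilon_{k+1}}$, and reduce to the identity \eqref{eqn:ek}. The only difference is at the very end: the paper verifies \eqref{eqn:ek} by checking the four admissible triples $(\epsilon_{k-1},\epsilon_k,\epsilon_{k+1})$, whereas you extract from the congruence $kq\equiv\pm1 \ (\mathrm{mod}\ 2p)$ the single fact $\epsilon_{k-1}=-\epsilon_{k+1}$ (which is precisely what rules out the paper's four excluded triples) and then verify the identity in one uniform computation --- a slightly cleaner and more explicitly justified finish, but not a different argument.
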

	\begin{proof} Comparing the coefficient of the second leading terms in $w_{11} = v_{11} y_{11} +v_{12} y_{21}$, we obtain
		\begin{equation*}  W_{11}=V_{11}+Y_{11}-\epsilon_{k-1}\epsilon_k d^{-\epsilon_{k-1}-\epsilon_k}.
		\end{equation*}
		Thus it is enough to show that $Y_{11}-Y_{21} = \epsilon_{k-1}\epsilon_k d^{-\epsilon_{k-1}-\epsilon_k}+\epsilon_{k-1}d^{-\epsilon_{k-1}}(d-d^{-1})$.
	On the other hand, from Lemma \ref{lem:key} and
		\[\rho\left(g_1^{\epsilon_k} g_2^{\epsilon_{k+1}}\right)=\pm 
		\begin{pmatrix}
		u - \epsilon_k \epsilon_{k+1} d^{\epsilon_k+\epsilon_{k+1}} & -\epsilon_{k+1} d^{-\epsilon_{k+1}} \\
		\epsilon_k d^{-\epsilon_k} u & - \epsilon_k \epsilon_{k+1} d^{-\epsilon_k-\epsilon_{k+1}}
		\end{pmatrix},
		\]
		we obtain $Y_{11}-Y_{21} = -\epsilon_k \epsilon_{k+1} d^{\epsilon_k+\epsilon_{k+1}}$. It follows that we need to check the equation
		\begin{equation}\label{eqn:ek}\epsilon_{k-1}\epsilon_k d^{-\epsilon_{k-1}-\epsilon_k}+\epsilon_{k-1}d^{-\epsilon_{k-1}}(d-d^{-1})+ \epsilon_k \epsilon_{k+1} d^{\epsilon_k+\epsilon_{k+1}}=0.
		\end{equation}
		Since the integer $k$ satisfies $ k q \equiv \pm1 \ (\mathrm{mod} \ 2p)$ with $-p < q <p$,  the sequence $(\epsilon_{k-1},\epsilon_k,\epsilon_{k+1})$ can not be one of  $(1,1,1)$, $(-1,-1,-1)$, $(1,-1,1)$, and $(-1,1-1)$. It leaves four possible cases, $(\epsilon_{k-1},\epsilon_{k},\epsilon_{k+1})=(1,1,-1)$, $(1,-1,-1)$, $(-1,1,1)$, and $(-1,-1,1)$. One can easily check that the equation \eqref{eqn:ek} holds for these four cases.
	\end{proof}
	
	This completes the proof, since Claim \ref{claim5} implies that $(y_{21}-(d-d^{-1})y_{22})\phi_v = \pm \epsilon_k d^{-\epsilon_k} \phi_w + h$ for some $h \in \Cbb[u]$ with $\deg h \leq \frac{p-1}{2}-2$.
	\bibliographystyle{alpha}
	\bibliography{biblog}
\end{document}